\documentclass[10pt,reqno]{amsart}
\usepackage[english]{babel}
\usepackage{amsmath}
\usepackage[misc]{ifsym}
\usepackage{mathrsfs,mathtools,thmtools}
\usepackage{faktor} 
\usepackage[utf8]{inputenc}
\usepackage[left=2.9cm,right=2.9cm,top=2.8cm,bottom=2.8cm]{geometry}
\usepackage{graphicx}
\usepackage{enumitem}
\usepackage{todonotes, cancel}
\usepackage[dvipsnames]{xcolor}
\usepackage[colorlinks=true,urlcolor=blue,citecolor=blue,linkcolor=blue,linktocpage,pdfpagelabels,bookmarksnumbered,bookmarksopen]{hyperref}

\usepackage{pgfplots}
\pgfplotsset{compat=1.18}

\usepackage{type1cm}

\newtheorem{theorem}{Theorem}[section]
\newtheorem{lemma}{Lemma}[section]
\newtheorem{prop}[lemma]{Proposition}

\theoremstyle{remark}
\newtheorem{rmk}[lemma]{Remark}

\newcommand{\R}{\mathbb R}
\newcommand{\N}{\mathbb N}
\newcommand{\dmu}{\, {\rm d} \mu}
\newcommand{\dnu}{\, {\rm d} \nu}
\newcommand{\dz}{{\mathrm{d}z}}

\allowdisplaybreaks 
\numberwithin{equation}{section}

\begin{document}
\title[Infinitely many solutions for a critical $p$-Grushin problem]{A note on critical problems involving the $p$-Grushin Operator: existence of infinitely many solutions}

\author[P. Malanchini]{Paolo Malanchini}
\address[Paolo Malanchini]{Dipartimento di Matematica e Applicazioni\\ Universit\`a degli Studi di Milano - Bicocca, via Roberto Cozzi 55, 20125 - Milano, Italy}
\email{p.malanchini@campus.unimib.it}

\author[G. Molica Bisci]{Giovanni Molica Bisci}
\address[Giovanni Molica Bisci]{Department of Human Sciences and Promotion of Quality of Life, San Raffaele University, via di Val Cannuta 247, I-00166 Roma, Italy} 
\email{giovanni.molicabisci@uniroma5.it}

\author[S. Secchi]{Simone Secchi}
\address[Simone Secchi]{Dipartimento di Matematica e Applicazioni\\ Universit\`a degli Studi di Milano - Bicocca, via Roberto Cozzi 55, 20125 - Milano, Italy}
\email{simone.secchi@unimib.it}

\keywords{$p$-Grushin operator, critical exponent, genus} \subjclass[2020]{35J70, 35B33, 35J20}

\begin{abstract}
We consider a critical problem in a bounded domain involving the $p$-Grushin operator $\Delta_\alpha^p$.
After a truncation argument, we obtain infinitely many solutions to our problem via Krasnoselskii's genus, extending a previous result of Garc\'ia Azorero and Peral Alonso in \cite{GP91} to the $p$-Grushin operator.
A central part of our analysis is the verification of the Palais-Smale condition of the associated functional under a certain level.
\end{abstract}

\maketitle

\section{Introduction}
In this note we consider the boundary-value problem
\begin{equation} 	
\label{problem:main}
\left\{ 
    \begin{alignedat}{2}
       - \Delta_\alpha^p u & = \lambda \lvert u \rvert^{q-2} u + \lvert u \rvert^{p^*_\alpha -2}u \quad &&\mbox{in} \;\; \Omega,\\
		    u &= 0 \quad &&\mbox{on} \;\; \partial\Omega, 
    \end{alignedat} 
\right.     
\end{equation}
where $\Omega$ is a bounded domain in $\R^N$, $\lambda>0$, $q\in (1,p)$, $\Delta_\alpha^p$ is the $p$-Grushin operator and $p^*_\alpha$ is the Sobolev critical exponent in this framework defined in \eqref{pstar}.

For the reader's convenience, We briefly recall the definition of the $p$-Grushin operator $\Delta_\alpha^p$. If we split the euclidean space $\R^N$ as $\R^m \times \R^\ell$, where $m \geq 1$, $\ell \geq 1$ satisfy $m+\ell=N$, a generic point $z \in \R^N$ can be written as
\begin{displaymath}
	z = \left(x,y\right) = \left( x_1,\ldots,x_m,y_1,\ldots,y_\ell \right),
\end{displaymath}
where $x \in \R^m$ and $y \in \R^\ell$.
We fix a nonnegative real parameter $\alpha$, and we define the Grushin gradient $\nabla_\alpha = (X_1,\dots, X_N)$ as the system of vector fields
\begin{displaymath}
X_i = \frac{\partial}{\partial x_i}, ~ i=1,\dots, m,\quad
 X_{m+j} = \lvert x \rvert^\alpha \frac{\partial}{\partial y_j}, ~ j=1,\dots,\ell.
\end{displaymath}
The positive integer \[N_\alpha \coloneqq m + (1+\alpha)\ell\]
is the \emph{homogeneous dimension} associated to the decomposition $N=m+\ell$.

For each $p\in (1,N_\alpha)$ and for each differentiable function $u\colon\R^N\to\R$, we define the \emph{Grushin $p$-Laplace Operator} ($p$-Grushin for short) by
\begin{equation*}
	\Delta_\alpha^p u \coloneqq \sum_{i=1}^N X_i (|\nabla_\alpha u|^{p-2} X_iu).
\end{equation*}
If $\alpha=0$ the operator $\Delta_\alpha^p$ reduces to the $p$-Laplace operator $\Delta_p$, while for $p=2$ it coincides with the Baouendi--Grushin operator 
\begin{equation*}
    \Delta_\alpha =  \Delta_x + |x|^{2\alpha}\Delta_y,
\end{equation*}
where $\Delta_x$ and $\Delta_y$ are the Laplace operators in the variables $x$ and $y$, respectively.
This degenerate operator was introduced by Baouendi \cite{baouendi} and by Grushin \cite{grushin}.
A crucial property of the Baouendi-Grushin operator is that it is not uniformly elliptic in $\R^N$, since it is degenerate on the subspace $\Sigma=\{0\} \times \R^\ell$. Going back to our problem \eqref{problem:main}, when $\Omega$ is disjoint from $\Sigma$, the Grushin operator reduces to a uniformly elliptic operator, and its theory is classical.
For this reason, when dealing with the ($p$-)Grushin operator, it is usually assumed that
\begin{equation}
\label{eq_cond_omega}
    \Omega \cap \Sigma \ne \emptyset.
\end{equation}

\medskip
It seems that Bieske and Gong in \cite{B06} were the first to treat a $p$-Laplace equation in Grushin-type spaces, while fundamental solutions for the $p$-Laplace equation on a class of Grushin vector spaces were studied by Bieske in \cite{B11}.

More recent works, such as those by Huang and Yang \cite{HY23} and Huang, Ma, and Wang \cite{HMW15}, investigate the existence, uniqueness, and regularity of solutions to the $p$-Laplace equation involving Grushin-type operators. Moreover, in \cite{WCCY20}, the authors prove a Liouville-type theorem for stable solutions of weighted $p$-Grushin equations. In \cite{AYZ25} a sharp remainder formula for the Poincar\'e inequality associated with $p$-Grushin vector fields was established. Finally, in the very recent \cite{CC26} existence and positivity of solutions for critical problems involving the $p$-Grushin operator with a subcritial perturbation are studied.

\medskip
The study of critical problems started with the celebrated paper~\cite{BN83}, where Br\'ezis and Nirenberg proved the existence of positive solutions for the problem
\begin{equation*} 
\left\{ 
    \begin{alignedat}{2}
       - \Delta u & = \lambda u + \lvert u \rvert^{2^* -2}u \quad &&\mbox{in} \;\; \Omega,\\
		    u &= 0 \quad &&\mbox{on} \;\; \partial\Omega, 
    \end{alignedat} 
\right.     
\end{equation*}
 on a bounded domain $\Omega\subset \mathbb{R}^N$, depending on the values of the positive parameter $\lambda$.
 Here $N \geq 3$ and $2^*=2N/(N-2)$ is the critical Sobolev exponent.

Analogous results for the $p$-Laplace operator have been obtained by Garc\'ia Azorero and Peral Alonso in \cite{GP91}. They considered the problem
\begin{equation} 	
\label{problem:peral}
\left\{ 
    \begin{alignedat}{2}
       - \Delta_p u & = \lambda \lvert u \rvert^{q-2} u + \lvert u \rvert^{p^* -2}u \quad &&\mbox{in} \;\; \Omega,\\
		    u &= 0 \quad &&\mbox{on} \;\; \partial\Omega, 
    \end{alignedat} 
\right.     
\end{equation}
in a smooth bounded domain $\Omega\subset \mathbb{R}^N$ with $p^* = Np/(N-p)$.
We can gather their main results as follows: see Theorem 3.2, Theorem 3.3, and Theorem 4.5 in \cite{GP91}.
\begin{theorem}\label{thm_peral}
    \begin{enumerate}[label=(\roman*)]
        \item If $q\in (p,p^*)$, there exists $\lambda_0>0$ such that \eqref{problem:peral} has a nontrivial solution for all $\lambda\ge \lambda_0$.
        \item If $q\in \left( \max\{p, p^*- p/(p-1)\}, p^* \right)$, there exists a nontrivial solution of \eqref{problem:peral} for all $\lambda>0$.
        \item If $q\in (1,p)$, there exists $\bar\lambda>0$ such that, for $0<\lambda<\bar\lambda$, problem \eqref{problem:peral} admits infinitely many solutions.
    \end{enumerate}
\end{theorem}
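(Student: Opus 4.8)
The plan is to work variationally with the energy functional
\[
I_\lambda(u) \coloneqq \frac1p\int_\Omega |\nabla u|^p\,dx - \frac{\lambda}{q}\int_\Omega |u|^q\,dx - \frac{1}{p^*}\int_\Omega |u|^{p^*}\,dx
\]
on $W_0^{1,p}(\Omega)$, whose critical points are exactly the weak solutions of \eqref{problem:peral}. The obstruction common to all three parts is that $W_0^{1,p}(\Omega)\hookrightarrow L^{p^*}(\Omega)$ is not compact, so the Palais--Smale condition for $I_\lambda$ (or for a suitable truncation of it) can be hoped for only below an energy threshold governed by the best Sobolev constant $S\coloneqq\inf\{\|\nabla u\|_p^p: u\in W_0^{1,p}(\Omega),\ \|u\|_{p^*}=1\}$, the relevant value being $\frac1N S^{N/p}$.

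For parts (i) and (ii), where $q>p$, I would exploit that $I_\lambda$ has a mountain-pass geometry: $u=0$ is a strict local minimum and $I_\lambda(tv)\to-\infty$ as $t\to+\infty$ along each fixed $v\neq 0$. Letting $c_\lambda$ be the mountain-pass level, the heart of the matter is the strict inequality $c_\lambda<\frac1N S^{N/p}$, obtained by evaluating $I_\lambda$ on a rescaled and cut-off Aubin--Talenti instanton concentrated at an interior point of $\Omega$ and estimating the negative contribution of the $\lambda|u|^q$-term, whose order in the concentration parameter is dictated by $q$; this is precisely where the hypothesis $q>p^*-p/(p-1)$ in (ii), respectively the requirement $\lambda\ge\lambda_0$ in (i), enters. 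Below the threshold $\frac1N S^{N/p}$, a Brezis--Lieb / concentration--compactness argument shows that Palais--Smale sequences cannot lose mass, hence converge strongly, and the mountain-pass theorem produces a nontrivial solution.

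For part (iii), with $q<p$ the functional $I_\lambda$ is unbounded below and the mountain-pass scheme is unavailable, so I would argue by truncation together with Krasnoselskii's genus. First, replace the critical term in $I_\lambda$ by a truncated nonlinearity that coincides with $\frac{1}{p^*}|u|^{p^*}$ when $\|u\|$ is small and is cut off for $\|u\|$ large, obtaining an even $C^1$ functional $\tilde I_\lambda$ that is bounded below, satisfies the global Palais--Smale condition once $\lambda$ is small enough (using that $\tilde I_\lambda\ge 0$ on the region where the truncation is active), and agrees with $I_\lambda$ near the origin. Next, for each $k\in\N$ set
\[
c_k \coloneqq \inf_{A\in\Gamma_k}\ \sup_{u\in A}\tilde I_\lambda(u),
\]
where $\Gamma_k$ denotes the family of compact symmetric subsets $A\subset W_0^{1,p}(\Omega)\setminus\{0\}$ with genus $\gamma(A)\ge k$; testing with small spheres inside $k$-dimensional subspaces, where norm equivalence makes the $-\frac{\lambda}{q}\|u\|_q^q$ term dominate for $\|u\|$ small because $q<p$, one gets $-\infty<c_k<0$ for every $k$, and $c_k\to 0^-$. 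The genus-based deformation lemma then yields that each $c_k$ is a critical value of $\tilde I_\lambda$, and whenever several $c_k$ coincide the corresponding critical set has genus $\ge 2$; either way $\tilde I_\lambda$ has infinitely many critical points at negative levels. Since any critical point with $\tilde I_\lambda<0$ lies in the region where $\tilde I_\lambda=I_\lambda$, these are genuine solutions of \eqref{problem:peral}, provided $0<\lambda<\bar\lambda$.

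The step I expect to be the main obstacle is the verification of the Palais--Smale condition for the truncated functional in part (iii): one must make quantitative, in terms of $S$ and $\lambda$, how small $\lambda$ must be so that every Palais--Smale sequence stays below the Sobolev threshold (equivalently, in the region where the cutoff is inactive), and then extract strong convergence via the Brezis--Lieb lemma. The companion delicacy in (i)--(ii) is the sharp estimate $c_\lambda<\frac1N S^{N/p}$ with the instanton test function, which is exactly what forces the stated restrictions on $q$ and $\lambda$.
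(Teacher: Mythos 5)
Your proposal is correct and follows essentially the same route as the original Garc\'ia Azorero--Peral Alonso proofs that this theorem merely records, and, for part (iii), the same truncation-plus-Krasnoselskii-genus scheme with negative minimax levels and a Palais--Smale threshold of the form $\frac{1}{N}S^{N/p}$ that the present paper adapts to the $p$-Grushin setting. The only caveat is that you should not claim a \emph{global} Palais--Smale condition for the truncated functional: compactness is available (and needed) only at negative levels, for $\lambda$ small, which is exactly what your parenthetical mechanism -- negative level forces the norm below the truncation radius, so the problem reduces to $I_\lambda$ below the Sobolev threshold -- actually delivers.
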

Problem~\eqref{problem:main} with $p = 2$ was investigated in \cite{AGLT24}, where the authors extend the Br\'ezis--Nirenberg result in the Grushin setting. 
Critical problems associated with the Grushin operator have been extensively studied. See also \cite{GANDAL2026130363} for a multiplicity result in the case $p=2$. 
We refer, among others, to the recent papers \cite{KMB25,L25}, which address existence and multiplicity results for critical Grushin problems with a Hardy term and for the Grushin--Choquard equation, respectively.

A multiplicity result for problem \eqref{problem:main} with $q=p$ was proved in \cite{MMBS25}, extending to all $\alpha>0$ the result of \cite{PSY16} for the $p$-Laplace operator.
Gandal, Loiudice and Tyagi recently studied problem \eqref{problem:main} with $p\in (q,p^*_\alpha)$ in \cite{GLT25}, extending the result of Garc\'ia Azorero and Peral Alonso to the $p$-Grushin framework. Among other results, the authors established the counterparts of statements~(i) and~(ii) in Theorem~\ref{thm_peral} for problem~\eqref{problem:main}.

\bigskip

In this paper we prove the existence of infinitely many solutions to \eqref{problem:main}, extending Theorem \ref{thm_peral} (iii) to the $p$-Grushin framework and concluding the study of critical problems for the $p$-Grushin operator started in \cite{GLT25}.

\medskip
We will look for solutions of \eqref{problem:main} in the Sobolev space $\mathring{W}^{1,p}_\alpha(\Omega)$  defined as the completion of $C^1_c(\Omega)$ with respect to the norm 
\begin{displaymath}
\|u\|_{\alpha,p} = \left(\int_\Omega \lvert \nabla_\alpha u \rvert^{p}\,\dz\right)^{1/p},
\end{displaymath}
whose details are given in Section \ref{subsec_func}.

We say that a function $u\in \mathring{W}^{1,p}_\alpha(\Omega)$ is a weak solution of \eqref{problem:main} if\footnote{The symbol $\nabla_\alpha u \nabla_\alpha \varphi$ is a shorthand for $\nabla_\alpha u \cdot \nabla_\alpha \varphi$.}
\begin{equation*}
    \int_\Omega |\nabla_\alpha u|^{p-2} \nabla_\alpha u \nabla_\alpha \varphi\,\dz =\lambda \int_\Omega |u|^{q-2}u \varphi\,\dz  + \int_\Omega |u|^{p^*_\alpha-2} u \varphi\,\dz,
\end{equation*}
for all $\varphi\in\mathring{W}^{1,p}_\alpha(\Omega),$
and so weak solutions of \eqref{problem:main} coincide with critical points of the functional $I_\lambda\colon\mathring{W}^{1,p}_\alpha(\Omega)\to\R$ defined by
\begin{equation} \label{eq_def_functional}
    I_\lambda(u) \coloneqq \frac{1}{p} \int_\Omega |\nabla_\alpha u|^p\,\dz - \frac{\lambda}{q} \int_\Omega |u|^q\,\dz - \frac{1}{p^*_\alpha}\int_\Omega |u|^{p^*_\alpha}\,\dz.
\end{equation}
We can state our main result.
\begin{theorem}
    \label{thm_main}
    Let $q\in (1,p)$ and let $\Omega\subset\R^N$ be a smooth bounded domain satisfying \eqref{eq_cond_omega}. There exists $\bar\lambda>0$ such that for all $\lambda\in (0,\bar\lambda)$, problem \eqref{problem:main} has infinitely many weak solutions in $\mathring{W}^{1,p}_\alpha(\Omega)$.
\end{theorem}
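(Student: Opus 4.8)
The plan is to adapt the classical truncation and Krasnoselskii genus argument of García Azorero and Peral Alonso to the $p$-Grushin setting, following the scheme behind Theorem~\ref{thm_peral}(iii). Because $q<p$, the functional $I_\lambda$ is unbounded below (the critical term dominates for large $\|u\|_{\alpha,p}$), so one cannot directly minimize or apply a symmetric mountain-pass/Lusternik--Schnirelman theory to $I_\lambda$ itself. The first step is therefore to perform a \emph{truncation}: fix a cut-off so that one works with a modified functional $\tilde I_\lambda$ which agrees with $I_\lambda$ on a ball $\{\|u\|_{\alpha,p}\le R_0\}$ (with $R_0$ small, depending on $\lambda$) and is modified for larger norms so as to become bounded below and coercive, or at least to control the geometry near zero. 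Concretely, one chooses, via the Sobolev inequality for $\mathring W^{1,p}_\alpha(\Omega)$ (the embedding $\mathring W^{1,p}_\alpha(\Omega)\hookrightarrow L^{p^*_\alpha}(\Omega)$, together with $L^q$), radii $0<R_0<R_1$ and a smooth nonincreasing function $\tau$ with $\tau\equiv 1$ on $[0,R_0]$, $\tau\equiv 0$ on $[R_1,\infty)$, and sets
\begin{displaymath}
\tilde I_\lambda(u) = \frac{1}{p}\|u\|_{\alpha,p}^p - \frac{\lambda}{q}\int_\Omega |u|^q\,\dz - \frac{1}{p^*_\alpha}\,\tau\!\left(\|u\|_{\alpha,p}\right)\int_\Omega |u|^{p^*_\alpha}\,\dz.
\end{displaymath}
One checks that for $\lambda$ small, $\tilde I_\lambda$ is bounded below, that $\tilde I_\lambda(u)<0$ on a suitable set near the origin, and — crucially — that if $u$ is a critical point of $\tilde I_\lambda$ with $\tilde I_\lambda(u)<0$ then necessarily $\|u\|_{\alpha,p}<R_0$, so $u$ is an honest critical point of $I_\lambda$, i.e. a weak solution of \eqref{problem:main}.

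The second step is the variational machinery. The functional $\tilde I_\lambda$ is even and $\tilde I_\lambda(0)=0$; to produce infinitely many critical points below level $0$ one uses the genus-based minimax values
\begin{displaymath}
c_k \coloneqq \inf_{A\in\Gamma_k}\ \sup_{u\in A}\ \tilde I_\lambda(u),\qquad
\Gamma_k \coloneqq \{\,A\subset \mathring W^{1,p}_\alpha(\Omega)\setminus\{0\}\ :\ A \text{ closed, symmetric},\ \gamma(A)\ge k\,\},
\end{displaymath}
where $\gamma$ is the Krasnoselskii genus. One must show each $\Gamma_k$ is nonempty and that $c_k<0$ for every $k$; this uses the finite-dimensionality trick: on a $k$-dimensional subspace of $\mathring W^{1,p}_\alpha(\Omega)$, all norms are equivalent, so the $L^q$-term ($q<p$) dominates the $\|\cdot\|_{\alpha,p}^p$-term on small spheres, forcing $\sup_{S_\rho\cap E_k}\tilde I_\lambda<0$ for small radius $\rho$, and $\gamma(S_\rho\cap E_k)=k$. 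Since $-\infty<c_1\le c_2\le\cdots<0$, the symmetric deformation lemma / Lusternik--Schnirelman theory (as in García Azorero--Peral, or Rabinowitz, or Struwe) yields that each $c_k$ is a critical value of $\tilde I_\lambda$, and if some value $c=c_k=\dots=c_{k+j}$ repeats then the critical set at that level has genus $\ge j+1$, hence is infinite. Either way one gets infinitely many critical points of $\tilde I_\lambda$ with negative critical values, which by Step~1 are weak solutions of \eqref{problem:main}. A standard observation — that these solutions accumulate at $0$, since $c_k\to 0^-$ — confirms they are genuinely distinct.

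The essential technical ingredient, and the \textbf{main obstacle}, is verifying a local Palais--Smale condition for $\tilde I_\lambda$: one needs $(\mathrm{PS})_c$ to hold for all $c$ in the relevant range (here $c<0$, and in fact below some explicit critical threshold related to the best $p$-Grushin--Sobolev constant $S_\alpha$). Since the problem involves the \emph{critical} exponent $p^*_\alpha$, the embedding $\mathring W^{1,p}_\alpha(\Omega)\hookrightarrow L^{p^*_\alpha}(\Omega)$ is not compact, and a bounded $(\mathrm{PS})$ sequence may lose mass by concentrating on $\Sigma\cap\Omega$ or elsewhere. The standard remedy is a concentration--compactness argument (P.-L. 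Lions' lemma, adapted to the Grushin geometry and its anisotropic dilations $\delta_r(x,y)=(rx,r^{1+\alpha}y)$) showing that below the threshold $c^* = \frac{1}{N_\alpha}\,S_\alpha^{N_\alpha/p}$ (suitably normalized) no concentration can occur, so the sequence is relatively compact. One then shows the truncation parameters $R_0,R_1$ and $\bar\lambda$ can be chosen so that all the minimax levels $c_k$ of $\tilde I_\lambda$ lie strictly below this threshold — here one uses that $c_k<0$ and that the truncated functional's sublevels stay in a region where $(\mathrm{PS})_c$ is available — closing the loop. I would expect the bulk of the paper to be devoted precisely to this $(\mathrm{PS})$ analysis for the $p$-Grushin operator, since the algebraic/genus part is essentially formal once the compactness below the threshold is in hand, and since the abstract deformation theory carries over verbatim from the Banach-space setting; the delicate points are the Brezis--Lieb type splitting for the critical term, the behavior of the $p$-Grushin energy under the anisotropic rescaling concentrated near the degeneracy set $\Sigma$, and the explicit bookkeeping that makes $\bar\lambda$ small enough to keep every $c_k$ below $c^*$.
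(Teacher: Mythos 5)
Your proposal is correct and follows essentially the same route as the paper: truncate the critical term via a cut-off in $\|u\|_{\alpha,p}$ so the modified even functional is bounded below, coercive, and coincides with $I_\lambda$ on the negative sublevels; build negative genus-based minimax levels $c_k$ using finite-dimensional subspaces; and verify a local Palais--Smale condition below the threshold $S^{N_\alpha/p}/N_\alpha$ (minus a $\lambda$-correction) via concentration--compactness for the $p$-Grushin operator, with $\bar\lambda$ chosen small so that all negative levels fall below that threshold. The only cosmetic difference is that the paper invokes an already established Grushin concentration--compactness theorem rather than redoing the Brezis--Lieb/rescaling analysis, and it handles multiplicity of repeated levels via the genus of $K_c$ rather than the accumulation $c_k\to 0^-$.
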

The paper is organized as follows. In Section~\ref{sec_prelim}, we introduce the functional framework and recall some preliminary results, including a concentration--compactness theorem, the main properties of Krasnoselskii's genus, and a classical deformation lemma.
In Section~\ref{sec_PS}, we establish the Palais-Smale condition for the functional $I_\lambda$ under a suitable energy level. Finally, in Section~\ref{sec_genus}, we prove the existence of infinitely many solutions to problem~\eqref{problem:main}, thus completing the proof of Theorem~\ref{thm_main}.

\section{Preliminaries}
\label{sec_prelim}

\subsection{Functional setting}
\label{subsec_func}
Fix a bounded domain $\Omega\subset\mathbb{R}^N$ and a number $p\in (1,\infty)$. The Sobolev space $\mathring{W}^{1,p}_\alpha(\Omega)$ is defined as the completion of $C_c^1(\Omega)$ with respect to the norm
\begin{displaymath}
\|u\|_{\alpha,p} = \left(\int_\Omega \lvert \nabla_\alpha u \rvert^{p}\,\dz\right)^{1/p}.
\end{displaymath}
Specially, the space $\mathring{W}^{1,2}_\alpha(\Omega) = \mathscr{H}_\alpha(\Omega)$ is a Hilbert space endowed with the inner product
\begin{displaymath}
	\langle u,v\rangle_\alpha = \int_{\Omega}^{} \nabla_\alpha u \nabla_\alpha v\, \, \dz.
\end{displaymath}
The following embedding result was proved in \cite[Proposition 3.2 and Theorem 3.3]{KL12}, see also \cite[Corollary 2.11]{HY23}.
\begin{prop} \label{prop_sobolev}
	Let $\Omega\subset\mathbb{R}^N$ be a bounded open set. Then the embedding
	\begin{displaymath}
		\mathring{W}^{1,p}_\alpha(\Omega) \hookrightarrow L^r(\Omega)
	\end{displaymath}
	is continuous for every $r\in [1,p^*_\alpha]$ and compact for every $r\in [1,p^*_\alpha)$,
    where $p^*_\alpha$ is the critical Sobolev exponent for the $p$\nobreakdash-Grushin operator defined as
	\begin{equation}
		\label{pstar}
		p^*_\alpha = \frac{p N_\alpha}{N_\alpha - p}.
	\end{equation}
\end{prop}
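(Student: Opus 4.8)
The plan is to prove the continuous embedding first and then deduce compactness from it by isolating the degeneracy locus $\Sigma=\{x=0\}$ with a cutoff, so that the deep analytic input enters only in the critical continuous inequality. Every $u\in C^1_c(\Omega)$ extends by zero to an element of $C^1_c(\R^N)$ without changing $\|u\|_{\alpha,p}$ or any $L^r$-norm, so by density it suffices to establish the global Grushin--Sobolev inequality
\begin{equation*}
\|u\|_{L^{p^*_\alpha}(\R^N)} \le C\,\|\nabla_\alpha u\|_{L^p(\R^N)}, \qquad u\in C^1_c(\R^N).
\end{equation*}
The exponent $p^*_\alpha=pN_\alpha/(N_\alpha-p)$ is forced by the anisotropic dilations $\delta_t(x,y)=(tx,t^{1+\alpha}y)$: under $\delta_t$ the measure $\dz$ scales by $t^{N_\alpha}$ while $|\nabla_\alpha u|$ scales homogeneously, so both sides carry the same power of $t$ exactly when $1/p-1/p^*_\alpha=1/N_\alpha$. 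I would obtain the inequality from the structure of $(\R^N,d_\alpha,\dz)$, where $d_\alpha$ is the Carnot--Carath\'eodory distance of $\nabla_\alpha$, as a doubling metric measure space supporting a $1$-Poincar\'e inequality for the fields $\nabla_\alpha$, together with the uniform lower volume bound $|B(z,r)|\ge c\,r^{N_\alpha}$; these are classical properties of Grushin-type vector fields, and $N_\alpha$ appears precisely as the homogeneous dimension governing the worst-case mass bound. The inequality with exponent $p^*_\alpha$ then follows by the Haj\l asz--Koskela machinery, and on the bounded domain $\Omega$ finiteness of $|\Omega|$ and H\"older's inequality upgrade it to continuity of $\mathring W^{1,p}_\alpha(\Omega)\hookrightarrow L^r(\Omega)$ for every $r\in[1,p^*_\alpha]$.

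For the compact embedding with $r<p^*_\alpha$ I would first prove the Rellich-type statement $\mathring W^{1,p}_\alpha(\Omega)\hookrightarrow\hookrightarrow L^p(\Omega)$. Let $(u_n)$ be bounded in $\mathring W^{1,p}_\alpha(\Omega)$, hence bounded in $L^{p^*_\alpha}(\Omega)$. For $\delta\in(0,1)$ choose $\chi_\delta\in C^\infty(\R^N)$ with $\chi_\delta=0$ on $\{|x|\le\delta\}$, $\chi_\delta=1$ on $\{|x|\ge2\delta\}$ and $|\nabla\chi_\delta|\le C/\delta$, and set $w_n^\delta=\chi_\delta u_n$. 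Since $w_n^\delta$ vanishes near both $\Sigma$ and $\partial\Omega$ it lies in the ordinary space $W^{1,p}_0(\Omega)$, and on $\{|x|\ge\delta\}$ the identity $\partial_{y_j}u=|x|^{-\alpha}X_{m+j}u$ gives $|\nabla u_n|\le\delta^{-\alpha}|\nabla_\alpha u_n|$, whence
\begin{equation*}
\|\nabla w_n^\delta\|_{L^p(\Omega)} \le \delta^{-\alpha}\|\nabla_\alpha u_n\|_{L^p(\Omega)} + \frac{C}{\delta}\,\|u_n\|_{L^p(\Omega)}.
\end{equation*}
Thus $(w_n^\delta)_n$ is bounded in $W^{1,p}_0(\Omega)$ and the classical Rellich--Kondrachov theorem provides a subsequence converging in $L^p(\Omega)$. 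On the other hand $u_n-w_n^\delta$ is supported in $\Omega_{2\delta}:=\{z\in\Omega:|x|<2\delta\}$, so H\"older's inequality and the uniform critical bound give
\begin{equation*}
\sup_n\|u_n-w_n^\delta\|_{L^p(\Omega)} \le \Big(\sup_n\|u_n\|_{L^{p^*_\alpha}(\Omega)}\Big)\,|\Omega_{2\delta}|^{1/p-1/p^*_\alpha} \xrightarrow[\delta\to0]{} 0,
\end{equation*}
because $|\Omega_{2\delta}|\to0$. Hence $(u_n)$ is uniformly approximated in $L^p(\Omega)$ by precompact families, so it is itself precompact, and a diagonal extraction over $\delta=1/k$ yields one subsequence convergent in $L^p(\Omega)$.

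Given $L^p$-compactness, I extend it to every $r\in[1,p^*_\alpha)$. For $r\in[p,p^*_\alpha)$ pick $\theta\in(0,1]$ with $1/r=\theta/p+(1-\theta)/p^*_\alpha$; the interpolation inequality
\begin{equation*}
\|u_n-u_m\|_{L^r(\Omega)} \le \|u_n-u_m\|_{L^p(\Omega)}^{\theta}\,\|u_n-u_m\|_{L^{p^*_\alpha}(\Omega)}^{1-\theta}
\end{equation*}
shows that the $L^p$-Cauchy subsequence is Cauchy in $L^r(\Omega)$, the critical factor staying bounded by the continuous embedding; for $r\in[1,p)$ the finiteness of $|\Omega|$ and H\"older reduce $L^r$ to $L^p$. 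This establishes compactness for every $r\in[1,p^*_\alpha)$.

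The essential difficulty is concentrated in the sharp global inequality of the first step: this is where the degenerate, non-uniformly-elliptic nature of $\nabla_\alpha$ is felt, and where the correct Sobolev exponent is dictated by the homogeneous dimension $N_\alpha$ rather than by $N$. The passage to compactness is comparatively soft; the only spot where the degeneracy could spoil the classical Rellich argument is the locus $\Sigma$, and the cutoff estimate shows that the critical integrability supplied by the first step---through the vanishing measure $|\Omega_{2\delta}|$---exactly absorbs the contribution near $\Sigma$. The same mechanism explains why the compact range must stop short of $p^*_\alpha$, where the tail estimate no longer gains smallness.
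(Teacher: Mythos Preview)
The paper does not give its own proof of this proposition: it is simply quoted from \cite[Proposition~3.2 and Theorem~3.3]{KL12} and \cite[Corollary~2.11]{HY23}, so there is no in-paper argument to compare against. Your outline is a correct, essentially self-contained proof, and it is worth recording what it adds. For the continuous inequality your route via doubling, $1$-Poincar\'e and the uniform lower volume bound $|B(z,r)|\ge c\,r^{N_\alpha}$ is legitimate even for non-integer $\alpha$ (where H\"ormander's condition fails), since the Haj\l asz--Koskela machinery needs only the metric-measure ingredients and not smooth hypoellipticity; this is precisely the content of the cited references, so here you are effectively reproving what the paper imports. The compactness step is the more original part of your proposal: instead of invoking an abstract Rellich theorem for degenerate or metric Sobolev spaces, you isolate the singular set $\Sigma$ with a cutoff in $|x|$, reduce to classical Rellich--Kondrachov on $\{|x|\ge\delta\}$ (where $|\nabla u|\le\delta^{-\alpha}|\nabla_\alpha u|$), and kill the strip $\{|x|<2\delta\}$ via the critical embedding and the vanishing measure $|\Omega_{2\delta}|\to0$. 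This is clean, elementary once the critical inequality is in hand, and makes transparent both why the degeneracy is harmless below $p^*_\alpha$ and why compactness must fail at $p^*_\alpha$. The only caveat is that the phrase ``classical properties of Grushin-type vector fields'' hides genuine work---exactly the work done in \cite{KL12}---so your first step is a reduction to the literature rather than a from-scratch proof; everything after that is correct and self-contained.
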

We may thus define the best constant of the Sobolev embedding $\mathring{W}^{1,p}_\alpha(\Omega)\hookrightarrow L^{p^*_\alpha}(\Omega)$:
\begin{equation}
	\label{eq:sobconst}
	S \coloneqq \inf_{\substack{ u \in \mathring{W}^{1,p}_\alpha(\Omega) \\ u \neq 0}} \frac{\int_\Omega \left\vert \nabla_\alpha u \right\vert^p \, \dz}{\left( \int_\Omega \left\vert u \right\vert^{p_\alpha^*} \, \dz \right)^{p/p_\alpha^*}}.
\end{equation}
For $z  =(x,y)\in\R^N = \R^m\times \R^\ell$ let
\begin{equation*}
d(z) = \left( |x|^{2\left(\alpha +1\right)} + (\alpha+1)^2 |y|^2\right)^{\frac{1}{2\left(\alpha+1\right)}},
\end{equation*}
be the homogeneous distance associated with the Grushin geometry. We indicate with \[B_R(z) =\{\xi\in\R^N :\,  d(z -\xi)<R \}\] the $d$-ball of center $z\in\R^N$ and radius $R>0$. A direct computation shows that $|B_R(z)| = |B_1(z)| R^{N_\alpha}$, where $|A|$ denotes the Lebesgue measure of $A\subset\R^N$.

\subsection{Some tools}\label{sebsection_tools}

We will need a concentration-compactness result for the $p$-Grushin operator proved in \cite[Theorem 3.3]{MMBS25}.
The space of all finite signed Radon measures on $\Omega\subset\R^N$ is denoted by $\mathcal{M}(\Omega,\R)$. 
A sequence of measures $(\mu_n)\subset \mathcal{M}(\Omega,\R)$ converges tightly to a measure $\mu$, written as $\mu_n \stackrel{*}{\rightharpoonup} \mu$, if
\begin{equation}
\label{measconv}
\int_{\Omega} f \, {\rm d}\mu_n \to \int_{\Omega} f \, {\rm d}\mu \quad \mbox{for all} \;\; f\in C_b(\Omega),
\end{equation}
where $C_b(\Omega)$ is the space of the bounded, continuous functions on $\Omega$. On the other hand, $(\mu_n)\subset \mathcal{M}(\Omega,\R)$ is said to converge weakly to $\mu$, written as $\mu_n \rightharpoonup \mu$, if \eqref{measconv} holds for all $f\in C_0(\Omega)$, where $C_0(\Omega)$ is the space of the continuous functions that vanish at infinity.

\begin{theorem}[{\cite[Theorem~3.3]{MMBS25}}]
	\label{thm:concentration}
    Suppose that $(u_n)\subset \mathring{W}^{1,p}_\alpha(\Omega)$ satisfies $u_n\rightharpoonup u$ in $\mathring{W}^{1,p}_\alpha(\Omega)$, and $| \nabla_\alpha u_n|\rightharpoonup \mu$, $| u_n |^{p^*_\alpha} \stackrel{*}{\rightharpoonup}\nu$ for some $u\in \mathring{W}^{1,p}_\alpha(\Omega)$ and $\mu$, $\nu$ are bounded non-negative measures on $\Omega$.
 	Then there exist an at most countable set $\mathcal{A}$, a family $\lbrace z_j\rbrace_{j\in \mathcal{A}}$ of distinct points in $\Omega$ and two families of positive numbers $\lbrace \mu_j \rbrace_{j\in \mathcal{A}}$, $\lbrace \nu_j \rbrace_{j\in \mathcal{A}}$ such that
		\begin{equation*}
		   \nu = | u |^{p^*_\alpha} + \sum_{j\in \mathcal{A}} \nu_j \delta_{z_j},  \quad  
              \mu\ge | \nabla_\alpha u|^p + \sum_{j\in \mathcal{A}}\mu_j \delta_{z_j}, \quad
              \mu_j \ge S \nu_j^{\frac{p}{p^*_\alpha}}\quad\hbox{for all $j\in \mathcal{A}$,} 
		\end{equation*}
        where $\delta_z$ indicates the Dirac delta concentrated at $z\in\R^N$.
\end{theorem}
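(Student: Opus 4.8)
The plan is to transcribe Lions' first concentration--compactness principle into the degenerate Grushin geometry; the only structural inputs are the Sobolev inequality encoded by the constant $S$ in \eqref{eq:sobconst}, the compactness of the embedding $\mathring W^{1,p}_\alpha(\Omega)\hookrightarrow L^p(\Omega)$ from Proposition~\ref{prop_sobolev}, the Leibniz rule $\nabla_\alpha(\varphi u)=\varphi\,\nabla_\alpha u+u\,\nabla_\alpha\varphi$ (valid because each $X_i$ is a first-order derivation), and the elementary fact that on the bounded set $\Omega$ the weighted gradient $\nabla_\alpha\varphi$ of any $\varphi\in C^1_c(\Omega)$ is bounded, since $\alpha\ge 0$ and $|x|$ is bounded. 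Passing to a subsequence I may assume $|\nabla_\alpha u_n|^p\stackrel{*}{\rightharpoonup}\mu$ and, by Proposition~\ref{prop_sobolev}, also $u_n\to u$ in $L^p(\Omega)$ and a.e. Writing $v_n\coloneqq u_n-u\rightharpoonup 0$, the Brezis--Lieb lemma (applicable since $(v_n)$ is bounded in $L^{p^*_\alpha}$ and $v_n\to 0$ a.e.) gives $|u_n|^{p^*_\alpha}-|v_n|^{p^*_\alpha}\to|u|^{p^*_\alpha}$ in $L^1(\Omega)$, so that, along a further subsequence with $|v_n|^{p^*_\alpha}\stackrel{*}{\rightharpoonup}\tilde\nu$, one has $\nu=|u|^{p^*_\alpha}\,\dz+\tilde\nu$. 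It remains to show that $\tilde\nu$ is purely atomic.

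To this end I would insert the admissible test functions $\varphi v_n$, with $\varphi\in C^1_c(\Omega)$, into \eqref{eq:sobconst}, obtaining $S^{1/p}\big(\int_\Omega|\varphi v_n|^{p^*_\alpha}\,\dz\big)^{1/p^*_\alpha}\le\big(\int_\Omega|\nabla_\alpha(\varphi v_n)|^p\,\dz\big)^{1/p}$. Expanding the right-hand side by the Leibniz rule and the triangle inequality in $L^p$, the cross contribution is bounded by $\big(\int_\Omega|v_n|^p|\nabla_\alpha\varphi|^p\,\dz\big)^{1/p}$, which vanishes as $n\to\infty$ because $v_n\to 0$ in $L^p(\Omega)$ and $\nabla_\alpha\varphi$ is bounded. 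Letting $n\to\infty$, and calling $\tilde\mu$ the weak-$*$ limit of $|\nabla_\alpha v_n|^p$ along a further subsequence, I arrive at the reverse Hölder inequality
\[
S\left(\int_\Omega|\varphi|^{p^*_\alpha}\, {\rm d}\tilde\nu\right)^{p/p^*_\alpha}\le\int_\Omega|\varphi|^p\, {\rm d}\tilde\mu,\qquad\varphi\in C^1_c(\Omega).
\]
From here the classical measure-theoretic lemma of Lions applies verbatim: testing on approximations of characteristic functions yields $\tilde\nu(E)\le S^{-p^*_\alpha/p}\tilde\mu(E)^{p^*_\alpha/p}$ for Borel sets $E$; since $p^*_\alpha/p>1$ this forces $\tilde\nu\ll\tilde\mu$ with Radon--Nikodym density positive only at the atoms of $\tilde\mu$, whence $\tilde\nu=\sum_{j\in\mathcal A}\nu_j\delta_{z_j}$ for an at most countable set of distinct points $z_j\in\Omega$ and weights $\nu_j>0$. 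Combined with the previous paragraph this gives $\nu=|u|^{p^*_\alpha}+\sum_{j\in\mathcal A}\nu_j\delta_{z_j}$.

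For the gradient measure I would argue separately for its diffuse and atomic parts. Weak lower semicontinuity of the convex functional $w\mapsto\int_\Omega\varphi\,|w|^p\,\dz$ (for fixed $\varphi\ge 0$) along $\nabla_\alpha u_n\rightharpoonup\nabla_\alpha u$ in $L^p$ gives $\mu\ge|\nabla_\alpha u|^p\,\dz$. To capture the atoms, I would now test \eqref{eq:sobconst} with $\varphi_\varepsilon u_n$, where $\varphi_\varepsilon\in C^1_c(\Omega)$ equals $1$ near $z_j$ and is supported in the $d$-ball $B_\varepsilon(z_j)$: letting first $n\to\infty$ and then $\varepsilon\to 0$, the term $\int_\Omega|u|^p|\nabla_\alpha\varphi_\varepsilon|^p\,\dz$ is controlled, via Hölder and the homogeneous scaling $|B_\varepsilon|=|B_1|\varepsilon^{N_\alpha}$ together with the identity $1-p/p^*_\alpha=p/N_\alpha$, by $C\big(\int_{B_\varepsilon(z_j)}|u|^{p^*_\alpha}\,\dz\big)^{p/p^*_\alpha}\to 0$, so that $\mu(\{z_j\})\ge S\,\nu_j^{p/p^*_\alpha}$. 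Setting $\mu_j\coloneqq S\,\nu_j^{p/p^*_\alpha}$, and since the absolutely continuous measure $|\nabla_\alpha u|^p\,\dz$ and the purely atomic measure $\sum_j\mu_j\delta_{z_j}$ are mutually singular and both dominated by $\mu$ (the atoms $\{z_j\}$ being Lebesgue-null), the two lower bounds add up to $\mu\ge|\nabla_\alpha u|^p\,\dz+\sum_{j\in\mathcal A}\mu_j\delta_{z_j}$, with $\mu_j\ge S\nu_j^{p/p^*_\alpha}$, which completes the proof.

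The step I expect to be the main obstacle is the passage to the limit in the two localized Sobolev inequalities, which is precisely where the Grushin structure enters: one must check that the cross terms produced by the non-Euclidean Leibniz rule are negligible --- relying on the compactness of $\mathring W^{1,p}_\alpha(\Omega)\hookrightarrow L^p(\Omega)$ and on the boundedness of $\nabla_\alpha\varphi$ --- and that the cutoff estimate at the concentration points closes with the correct power of $\varepsilon$, which is dictated by the homogeneous dimension $N_\alpha$ rather than by $N$. The remaining ingredient, Lions' measure lemma, is purely measure-theoretic and insensitive to the underlying geometry.
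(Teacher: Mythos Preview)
The paper does not supply its own proof of this statement: Theorem~\ref{thm:concentration} is quoted verbatim from \cite[Theorem~3.3]{MMBS25} and invoked as a black box in the proof of Lemma~\ref{lemma:Palais-Smale}. There is therefore no in-paper argument against which to compare your proposal.

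On its own merits, your sketch is the standard Lions argument transported to the Grushin setting and is correct in outline. Two Grushin-specific points deserve to be made explicit. First, the existence of a cutoff $\varphi_\varepsilon\in C_c^\infty$ with $\varphi_\varepsilon\equiv 1$ on $B_\varepsilon(z_j)$, $\operatorname{supp}\varphi_\varepsilon\subset B_{2\varepsilon}(z_j)$ and $|\nabla_\alpha\varphi_\varepsilon|\le d/\varepsilon$ is not automatic for degenerate vector fields; the present paper cites \cite{FSS97,GN98} for exactly this (see \eqref{eq_test_phi}), and you should do the same. Second, your closing H\"older estimate for $\int|u|^p|\nabla_\alpha\varphi_\varepsilon|^p$ relies on this gradient bound together with $|B_\varepsilon|\sim\varepsilon^{N_\alpha}$, so that the dual factor $\big(\int_{B_{2\varepsilon}}|\nabla_\alpha\varphi_\varepsilon|^{N_\alpha}\big)^{p/N_\alpha}$ stays bounded as $\varepsilon\to 0$; you allude to the correct scaling but stop short of writing this down. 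With these two details supplied, the remainder (Brezis--Lieb, the reverse H\"older inequality for $\tilde\mu$ and $\tilde\nu$, Lions' measure lemma, and the mutual-singularity argument combining the diffuse and atomic lower bounds on $\mu$) is purely measure-theoretic and goes through unchanged.
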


We now recall the notion of Krasnoselskii's genus. Let $X$ be a Banach space and $\mathcal{E}$ be the class of the closed and symmetric with respect to the origin subsets of $X\setminus\{0\}$.
 
The genus of $A\in\mathcal{E}$ is the smallest positive integer $k$ such that there exists an odd and continuous map $\phi\colon A\rightarrow \mathbb{R}^k\setminus \{0\}$ and it is denoted by $\gamma (A)$. If there is no such map, we set $\gamma(A)=+\infty$. Finally, we define $\gamma(\emptyset)=0$.

The main properties of the genus are collected in the following proposition, see e.g. \cite[Lemma 1.2]{AR73}. For $A\in\mathcal{E}$ and $\delta>0$, let $N_\delta(A)$ denote a uniform $\delta$-neighborhood of $A$, i.e. 
\begin{displaymath}
N_\delta(A) = \{x\in X: \operatorname{dist}(x,A)\leq \delta \}.
\end{displaymath}
\begin{prop}
\label{prop_genus}
    Let $A$, $B\in \mathcal{E}$. 
    \begin{enumerate}[label=(\roman*)]
    \item If $A\subset B$, then $\gamma(A)\leq \gamma(B).$
    \item If there exists an odd homeomorphism between $A$ and $B$, then $\gamma(A)= \gamma(B).$
    \item $\gamma(A\cup B)\leq \gamma(A)+\gamma(B)$.
    \item If $\gamma(B)<\infty$, $\gamma(\overline{A\setminus B})\geq \gamma(A)- \gamma(B)$ .
    \item $\gamma(\mathbb{S}^{k-1}) = k$, where $\mathbb{S}^{k-1}$ is the $k$ dimensional sphere  in $\R^k$.
    \item If $A$ is compact then $\gamma(A)<\infty$ and there exists $\delta>0$ such that  $N_\delta(A)\in\mathcal{E}$ and $\gamma(A)=\gamma(N_\delta(A))$.
    \end{enumerate}
\end{prop}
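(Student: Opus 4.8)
The plan is to verify the six properties essentially in the order listed, since the later ones build on the earlier, and to isolate the one genuinely topological input---the Borsuk--Ulam theorem---which enters only in part~(v). Throughout, the recurring technical device will be the Tietze extension theorem combined with an antisymmetrization trick that turns an arbitrary continuous extension into an odd one.

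For the elementary functorial properties I would argue as follows. Monotonicity~(i) is immediate: if $\gamma(B)=k<\infty$ and $\phi\colon B\to\mathbb{R}^k\setminus\{0\}$ is odd and continuous, then $\phi|_A$ witnesses $\gamma(A)\le k$ whenever $A\subset B$, while the case $\gamma(B)=\infty$ is vacuous. For~(ii) I would first record the more general fact that an odd continuous map $f\colon A\to B$ forces $\gamma(A)\le\gamma(B)$, obtained by composing an optimal odd map on $B$ with $f$; applying this to an odd homeomorphism $h$ and to $h^{-1}$ yields the equality. For subadditivity~(iii), with $\gamma(A)=m$ and $\gamma(B)=n$ finite and optimal odd maps $\phi,\psi$, I would extend $\phi$ to a continuous $\mathbb{R}^m$-valued map $\Phi$ on $X$ by Tietze and antisymmetrize via $x\mapsto\tfrac12(\Phi(x)-\Phi(-x))$ to obtain an odd extension $\tilde\phi$ agreeing with $\phi$ on the symmetric set $A$, and likewise $\tilde\psi$. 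The map $(\tilde\phi,\tilde\psi)\colon A\cup B\to\mathbb{R}^{m+n}$ is then odd, continuous, and nonvanishing on $A\cup B$ (its first block is nonzero on $A$, its second on $B$), whence $\gamma(A\cup B)\le m+n$. Part~(iv) is then a formal consequence of (i) and (iii): since $A=(A\setminus B)\cup(A\cap B)\subset\overline{A\setminus B}\cup B$, and $\overline{A\setminus B}$ is closed, symmetric, and contained in the closed set $A\subset X\setminus\{0\}$ (hence lies in $\mathcal{E}$), I would write $\gamma(A)\le\gamma(\overline{A\setminus B})+\gamma(B)$ and rearrange using $\gamma(B)<\infty$.

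The heart of the matter is~(v). The bound $\gamma(\mathbb{S}^{k-1})\le k$ follows at once from the odd continuous inclusion $\mathbb{S}^{k-1}\hookrightarrow\mathbb{R}^k\setminus\{0\}$. For the reverse inequality I would invoke the Borsuk--Ulam theorem in the form that every continuous map $\mathbb{S}^{k-1}\to\mathbb{R}^{k-1}$ identifies some antipodal pair. Were $\gamma(\mathbb{S}^{k-1})=j<k$, an odd continuous $\phi\colon\mathbb{S}^{k-1}\to\mathbb{R}^j\setminus\{0\}$, viewed through the inclusion $\mathbb{R}^j\subset\mathbb{R}^{k-1}$, would be an odd map to $\mathbb{R}^{k-1}$, so the antipodal identification $\phi(x)=\phi(-x)=-\phi(x)$ would force $\phi(x)=0$ at some point, contradicting that $\phi$ avoids $0$. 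Hence $\gamma(\mathbb{S}^{k-1})=k$. This is the step I expect to be the main obstacle, in the precise sense that it is the only one requiring nontrivial algebraic topology rather than point-set manipulation and the extension trick.

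Finally, for~(vi) let $A$ be compact; since $A\subset X\setminus\{0\}$ we have $0\notin A$ and $x\neq-x$ for every $x\in A$. To prove finiteness I would cover $A$ by finitely many open balls $O_1,\dots,O_n$ centered at points of $A$ with radius smaller than the norm of the center, which guarantees $O_i\cap(-O_i)=\emptyset$; taking a continuous partition of unity $\{\eta_i\}$ on $A$ subordinate to this cover and setting $\psi_i(x)=\eta_i(x)-\eta_i(-x)$, the map $(\psi_1,\dots,\psi_n)$ is odd, continuous, and nonvanishing on $A$ (at any $x\in A$ some $\eta_i(x)>0$ while $\eta_i(-x)=0$), so $\gamma(A)\le n<\infty$. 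For the neighborhood statement, let $k=\gamma(A)$ and extend an optimal odd map to an odd continuous $\tilde\phi\colon X\to\mathbb{R}^k$ as in~(iii). The open set $\{\tilde\phi\neq0\}$ contains the compact set $A$, hence contains a uniform neighborhood $N_\delta(A)$ for all sufficiently small $\delta$; shrinking $\delta$ below $\operatorname{dist}(0,A)$ as well ensures $N_\delta(A)\in\mathcal{E}$, being closed, symmetric (by symmetry of $A$ and of the norm), and disjoint from $0$. Then $\tilde\phi$ restricted to $N_\delta(A)$ gives $\gamma(N_\delta(A))\le k$, while~(i) gives the reverse inequality, so $\gamma(N_\delta(A))=\gamma(A)$.
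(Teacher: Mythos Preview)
Your argument is correct and follows the standard route: Tietze extension with antisymmetrization for the functorial properties (i)--(iv), Borsuk--Ulam for the sphere in (v), and a partition-of-unity construction together with the odd-extension trick for (vi). There is, however, nothing to compare against in the paper itself: the proposition is stated without proof and simply attributed to Ambrosetti--Rabinowitz (Lemma~1.2 of the cited reference), so the paper treats these genus properties as classical background rather than something to be established. What you have written is essentially the proof one finds in that and similar sources.
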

 We will need the following deformation lemma, see \cite[Lemma 1.3]{AR73}. Given $I\in C^1(X,\mathbb{R})$, for any $c,d\in \mathbb{R}$, we denote
 \begin{equation*}
  K_c=\{u\in X: I'(u)=0~\text{and}~I(u)=c\}~\text{and}~I^d=\{u\in X:I(u)\leq d\}. 
 \end{equation*}
\begin{lemma}
\label{lemma_deformation}
Suppose $I\in C^1(X,\mathbb{R})$ satisfies the Palais-Smale condition at level $c\in \R$ and let $U$ be any neighborhood of $K_c$. Then there exist $\eta_t(x)=\eta(t,x)\in C([0,1] \times X, X )$ and constants $\varepsilon_0>\varepsilon>0$  such that 
\begin{enumerate}[label=(\roman*)]
    \item $\eta_0(x) = x$ for all $x\in X$. 
    \item  $\eta_t(x)=x$ for $x\notin I^{-1}([c-\varepsilon_0,c+\varepsilon_0])$ and all $t\in [0,1]$.
    \item $\eta_t$ is a homeomorphism of $X$ into $X$ for all $t\in [0,1]$.
    \item $I(\eta_t(x))\leq I(x)$ for all $x\in X$ and $t\in [0,1]$. 
    \item $\eta_1(I^{c+\varepsilon}\setminus  U)\subset I^{c-\varepsilon}$.
    \item If $K_c=\emptyset$, then $\eta_1(I^{c+\varepsilon})\subset I^{c-\varepsilon}$.
    \item If $I$ is even, then $\eta_t$ is odd for any $t\in [0,1]$.
\end{enumerate}
\end{lemma}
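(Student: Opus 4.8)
The plan is to produce $\eta$ as the flow of a truncated pseudo-gradient vector field for $I$, the classical construction behind \cite[Lemma~1.3]{AR73}; I write $X^*$ for the dual of $X$ and work on the open set $\tilde X\coloneqq\{u\in X:I'(u)\neq 0\}$. First I would record two consequences of the Palais--Smale condition at level $c$. The set $K_c$ is compact: every sequence in $K_c$ is a Palais--Smale sequence at level $c$, hence has a subsequence converging to a limit which, by continuity of $I$ and $I'$, again lies in $K_c$. Shrinking $U$ if necessary, I fix $\delta>0$ so small that the $2\delta$-neighborhood $N_{2\delta}(K_c)$ is contained in $U$. The quantitative input is a lower bound on the gradient away from $K_c$: there are $b>0$ and $\varepsilon_0>0$ with
\begin{equation*}
\|I'(u)\|_{X^*}\ge b\qquad\text{whenever } |I(u)-c|\le\varepsilon_0 \text{ and } u\notin N_\delta(K_c).
\end{equation*}
Were this false, a violating sequence would be Palais--Smale at level $c$ and thus converge, up to a subsequence, to a point of $K_c$, contradicting that it stays outside $N_\delta(K_c)$.

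Next I invoke the standard existence of a locally Lipschitz pseudo-gradient field $V\colon\tilde X\to X$ with $\|V(u)\|\le 2\|I'(u)\|_{X^*}$ and $\langle I'(u),V(u)\rangle\ge\|I'(u)\|_{X^*}^2$; these force $\|I'(u)\|_{X^*}\le\|V(u)\|\le 2\|I'(u)\|_{X^*}$. I localize $V$ by two locally Lipschitz cut-offs valued in $[0,1]$: a function $g$ equal to $1$ on $\{|I-c|\le\varepsilon\}$ and to $0$ off $\{|I-c|\le\varepsilon_0\}$, and a function $h$ built from $\operatorname{dist}(\cdot,K_c)$, equal to $0$ on $N_\delta(K_c)$ and to $1$ off $N_{2\delta}(K_c)$, where $0<\varepsilon<\varepsilon_0$ is fixed below. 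Setting $W(u)\coloneqq g(u)h(u)V(u)/\|V(u)\|^2$ on $\tilde X$ and $W(u)\coloneqq 0$ elsewhere, one checks that $W$ is globally defined, locally Lipschitz, and bounded: on the support of $W$ one has $h>0$, hence $u\notin N_\delta(K_c)$ and $\|I'(u)\|_{X^*}\ge b$, so $\|W(u)\|\le 1/\|I'(u)\|_{X^*}\le 1/b$. Consequently the Cauchy problem $\partial_t\eta(t,x)=-W(\eta(t,x))$, $\eta(0,x)=x$, has a unique solution defined for all $t\in[0,1]$, and $\eta\in C([0,1]\times X,X)$.

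Properties (i)--(iv) are then immediate. Property (i) is the initial condition; (ii) holds because $g$, hence $W$, vanishes off $I^{-1}([c-\varepsilon_0,c+\varepsilon_0])$; (iii) is the standard invertibility of the flow of an autonomous, complete, locally Lipschitz field, with inverse $\eta_{-t}$; and (iv) follows from
\begin{equation*}
\frac{d}{dt}I(\eta_t(x))=-g(\eta_t)h(\eta_t)\frac{\langle I'(\eta_t),V(\eta_t)\rangle}{\|V(\eta_t)\|^2}\le 0 .
\end{equation*}
For (vi), when $K_c=\emptyset$ one takes $h\equiv 1$: the same contradiction argument via the Palais--Smale condition now yields $\|I'\|_{X^*}\ge b$ on the whole strip $\{|I-c|\le\varepsilon_0\}$, and the estimate below applies with $U$ absent. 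For (vii), if $I$ is even then $I'$ is odd and $K_c$ is symmetric; replacing $V$ by $u\mapsto\tfrac12(V(u)-V(-u))$ yields an odd pseudo-gradient, while $g$ and $h$ are even as functions of $I$ and of $\operatorname{dist}(\cdot,K_c)$. Hence $W$ is odd, and applying uniqueness to $t\mapsto-\eta_t(-x)$ gives $\eta_t(-x)=-\eta_t(x)$.

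The heart of the matter is (v), and it is where $\varepsilon$ is fixed. Two descent estimates drive it: in the region where $g=h=1$ the pseudo-gradient inequalities give $\frac{d}{dt}I(\eta_t)\le-\tfrac14$, while wherever $W\neq 0$ one has $\langle I'(\eta_t),V(\eta_t)\rangle/\|V(\eta_t)\|\ge\tfrac12\|I'(\eta_t)\|_{X^*}\ge b/2$, so that $I$ falls by at least $b/2$ per unit length of trajectory travelled. I fix $\varepsilon\le\min\{\varepsilon_0/2,\,1/16,\,b\delta/8\}$ and take $x\in I^{c+\varepsilon}\setminus U$; by monotonicity it suffices to produce a time $t\in[0,1]$ with $I(\eta_t(x))\le c-\varepsilon$. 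Suppose no such time exists, so the whole trajectory lies in $\{c-\varepsilon<I\le c+\varepsilon\}$, where $g\equiv1$. Writing $\rho(t)=\operatorname{dist}(\eta_t(x),K_c)$, we have $\rho(0)\ge 2\delta$ since $x\notin U\supset N_{2\delta}(K_c)$, and $\rho$ is $1$-Lipschitz along the flow; hence if $\rho(t)<\tfrac32\delta$ the trajectory would have covered length exceeding $\tfrac12\delta$, forcing an energy drop above $b\delta/4\ge 2\varepsilon$ and pushing $I$ below $c-\varepsilon$, a contradiction. Therefore $\rho(t)\ge\tfrac32\delta$ for all $t$, so $h(\eta_t)\ge\tfrac12$ and $\frac{d}{dt}I(\eta_t)\le-\tfrac18$; integrating over $[0,1]$ again yields a drop exceeding $2\varepsilon$, the same contradiction. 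Either way the trajectory reaches $I^{c-\varepsilon}$, proving (v). The delicate point is precisely this balancing of the per-time and per-length descent rates against the transit distance $\delta$ and the speed bound $1/b$, which is what pins down the admissible range of $\varepsilon$ and prevents a trajectory from freezing near $K_c$ at a level above $c-\varepsilon$.
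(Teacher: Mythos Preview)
Your argument is correct and is precisely the classical pseudo-gradient flow construction behind \cite[Lemma~1.3]{AR73}. Note that the paper does not supply its own proof of this lemma: it merely states it and refers to Ambrosetti--Rabinowitz, so there is nothing to compare against beyond that citation, and your sketch faithfully reconstructs that source. One small point worth making explicit: your estimate $h(\eta_t)\ge\tfrac12$ when $\rho(t)\ge\tfrac32\delta$ presupposes a specific (e.g.\ piecewise-linear in the distance) choice of $h$, which you only described qualitatively; fixing $h(u)=\min\{1,\max\{0,(\operatorname{dist}(u,K_c)-\delta)/\delta\}\}$ removes any ambiguity.
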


\begin{rmk}
\label{remark_epsilon}
    By \cite[Remark 1.4]{AR73}, if $c<0$ we may choose $\varepsilon_0< -c$. 
\end{rmk}

\section{Palais-Smale condition}
\label{sec_PS}
In order to prove Theorem \ref{thm_main}, we need the functional $I_\lambda$ defined in \eqref{eq_def_functional} to satisfy the Palais-Smale condition under a certain level.


\begin{lemma}
	\label{lemma:Palais-Smale}
	 $I_\lambda$ satisfies the Palais-Smale condition $(PS)_c$ for all $c<S^{N_\alpha/p} /N_\alpha - h \lambda^{\frac{p^*_\alpha}{p^*_\alpha-q}}$, where 
    \begin{equation}
    \label{eq_def_h}
h = N_\alpha^{\frac{q}{p^*_\alpha-q}} |\Omega |\left(\frac{1}{q} -\frac{1}{p} \right)^{\frac{p^*_\alpha}{p^*_\alpha-q}} \left[\left(\frac{q}{p^*_\alpha} \right)^{\frac{q}{p^*_\alpha-q}}  -\left(\frac{q}{p^*_\alpha} \right)^{\frac{p^*_\alpha}{p^*_\alpha-q}}  \right] \ge 0.
\end{equation}
\end{lemma}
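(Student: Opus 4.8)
The plan is to follow the classical Brézis–Nirenberg / Garcia Azorero–Peral strategy, now in the $p$-Grushin setting, using the concentration-compactness Theorem~\ref{thm:concentration}. Let $(u_n)\subset\mathring{W}^{1,p}_\alpha(\Omega)$ be a Palais-Smale sequence at level $c<S^{N_\alpha/p}/N_\alpha-h\lambda^{p^*_\alpha/(p^*_\alpha-q)}$, i.e. $I_\lambda(u_n)\to c$ and $I_\lambda'(u_n)\to 0$ in the dual space. The first step is \textbf{boundedness}: from $I_\lambda(u_n)-\frac1{p^*_\alpha}\langle I_\lambda'(u_n),u_n\rangle$ one gets $(\frac1p-\frac1{p^*_\alpha})\|u_n\|_{\alpha,p}^p\le c+o(1)+o(\|u_n\|_{\alpha,p})+\lambda(\frac1q-\frac1{p^*_\alpha})\int_\Omega|u_n|^q$, and since $q<p<p^*_\alpha$ the subcritical term is controlled via Hölder and the Sobolev embedding of Proposition~\ref{prop_sobolev} by $C\|u_n\|_{\alpha,p}^q$ with $q<p$; hence $(u_n)$ is bounded. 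Passing to a subsequence, $u_n\rightharpoonup u$ in $\mathring{W}^{1,p}_\alpha(\Omega)$, $u_n\to u$ in $L^r(\Omega)$ for $r<p^*_\alpha$ (compact embedding) and a.e., and $|\nabla_\alpha u_n|\rightharpoonup\mu$, $|u_n|^{p^*_\alpha}\stackrel{*}{\rightharpoonup}\nu$ for suitable bounded nonnegative measures; apply Theorem~\ref{thm:concentration} to obtain the at most countable set $\mathcal A$, points $z_j$, and masses $\mu_j\ge S\nu_j^{p/p^*_\alpha}$.

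The second step is to \textbf{show $\mathcal A=\emptyset$}. Fix $j\in\mathcal A$ and a cutoff $\varphi_{\varepsilon}$ equal to $1$ near $z_j$, supported in $B_\varepsilon(z_j)$ (in the Grushin metric $d$), with $\varepsilon|\nabla_\alpha\varphi_\varepsilon|$ bounded; test $I_\lambda'(u_n)$ against $\varphi_\varepsilon u_n$. The term $\int|\nabla_\alpha u_n|^{p-2}\nabla_\alpha u_n\cdot u_n\nabla_\alpha\varphi_\varepsilon$ vanishes as $n\to\infty$ and then $\varepsilon\to0$ by Hölder and the fact that $u\in L^{p^*_\alpha}$ has no mass at a point; the subcritical term vanishes likewise since $u_n\to u$ in $L^q$; one is left with $\mu(\{z_j\})\le\nu(\{z_j\})$, i.e. $\mu_j\le\nu_j$, which together with $\mu_j\ge S\nu_j^{p/p^*_\alpha}$ forces either $\nu_j=0$ or $\nu_j\ge S^{N_\alpha/p}$. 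To exclude the latter, suppose $\nu_{j_0}\ge S^{N_\alpha/p}$ for some $j_0$; then using the identity $c=\lim I_\lambda(u_n)-\frac1{p^*_\alpha}\langle I_\lambda'(u_n),u_n\rangle = (\frac1p-\frac1{p^*_\alpha})\lim\int|\nabla_\alpha u_n|^p -\lambda(\frac1q-\frac1{p^*_\alpha})\int|u|^q$, together with $\lim\int|\nabla_\alpha u_n|^p=\mu(\Omega)\ge\int|\nabla_\alpha u|^p+\mu_{j_0}\ge\int|\nabla_\alpha u|^p+S^{N_\alpha/p}$, we get
\begin{equation*}
c\ge \frac1{N_\alpha}S^{N_\alpha/p} + \left(\frac1p-\frac1{p^*_\alpha}\right)\int_\Omega|\nabla_\alpha u|^p - \lambda\left(\frac1q-\frac1{p^*_\alpha}\right)\int_\Omega|u|^q.
\end{equation*}
It then suffices to bound the last two terms below by $-h\lambda^{p^*_\alpha/(p^*_\alpha-q)}$; but since $u$ is (essentially) a weak solution of the equation (obtained by passing to the limit in $I_\lambda'(u_n)=o(1)$, using that $|\nabla_\alpha u_n|^{p-2}\nabla_\alpha u_n\rightharpoonup|\nabla_\alpha u|^{p-2}\nabla_\alpha u$ — a point needing the usual Boccardo–Murat a.e.-gradient-convergence argument on $\Omega\setminus\{z_j\}$), we have $\|\nabla_\alpha u\|^p = \lambda\int|u|^q+\int|u|^{p^*_\alpha}\ge\lambda\int|u|^q$, so $E(u):=(\frac1p-\frac1{p^*_\alpha})\|\nabla_\alpha u\|^p-\lambda(\frac1q-\frac1{p^*_\alpha})\int|u|^q \ge I_\lambda(u)$-type quantity; more directly, minimising $t\mapsto(\frac1p-\frac1{p^*_\alpha})\lambda t-\lambda(\frac1q-\frac1{p^*_\alpha})t^{q/p}\cdot(\dots)$ — the cleanest route is: from $\|\nabla_\alpha u\|^p\ge\lambda\int|u|^q$ and Hölder $\int|u|^q\le|\Omega|^{1-q/p^*_\alpha}(\int|u|^{p^*_\alpha})^{q/p^*_\alpha}$ combined with $\int|u|^{p^*_\alpha}\le\|\nabla_\alpha u\|^p$ (again from the equation, since $\int|u|^{p^*_\alpha}\le\|\nabla_\alpha u\|^p$), reduce to a one-variable minimisation in $s=\|\nabla_\alpha u\|^p$ whose minimum value is exactly $-h\lambda^{p^*_\alpha/(p^*_\alpha-q)}$ with $h$ as in \eqref{eq_def_h}. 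This contradicts $c<S^{N_\alpha/p}/N_\alpha-h\lambda^{p^*_\alpha/(p^*_\alpha-q)}$, so no concentration occurs and $\mathcal A=\emptyset$; hence $\int_\Omega|u_n|^{p^*_\alpha}\to\int_\Omega|u|^{p^*_\alpha}$.

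The final step is \textbf{strong convergence}. With no concentration, $u_n\to u$ in $L^{p^*_\alpha}(\Omega)$; testing $\langle I_\lambda'(u_n)-I_\lambda'(u),u_n-u\rangle\to0$ and using the $L^q$ and $L^{p^*_\alpha}$ strong convergence of the lower-order terms, one gets $\int_\Omega(|\nabla_\alpha u_n|^{p-2}\nabla_\alpha u_n-|\nabla_\alpha u|^{p-2}\nabla_\alpha u)\cdot(\nabla_\alpha u_n-\nabla_\alpha u)\to0$; the standard Simon inequalities for the $p$-Laplacian-type vector field then yield $\nabla_\alpha u_n\to\nabla_\alpha u$ in $L^p$, i.e. $u_n\to u$ in $\mathring{W}^{1,p}_\alpha(\Omega)$, which is the $(PS)_c$ condition. \textbf{The main obstacle} I anticipate is the bookkeeping in the second step: pinning down the exact constant $h$ in \eqref{eq_def_h} from the one-variable minimisation requires carefully tracking the Hölder exponents and the two inequalities $\int|u|^{p^*_\alpha}\le\|\nabla_\alpha u\|^p$ and $\lambda\int|u|^q\le\|\nabla_\alpha u\|^p$ coming from $u$ being a weak (sub)solution, and one must make sure the argument also covers the trivial case $u\equiv0$ (where $h\ge0$ makes the threshold consistent) and the case $\mathcal A=\emptyset$ directly.
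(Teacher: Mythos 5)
Your overall skeleton (boundedness, concentration--compactness, cutoff functions at the atoms, exclusion of concentration via an energy lower bound, then Simon-type inequalities for strong convergence) is the same as the paper's, and those parts are fine. The genuine gap is in the step excluding $\nu_{j_0}\ge S^{N_\alpha/p}$. You work with $I_\lambda(u_n)-\tfrac1{p^*_\alpha}\langle I_\lambda'(u_n),u_n\rangle$, which keeps the gradient term, and then you need to convert $\|\nabla_\alpha u\|_{\alpha,p}^p$ into information about $\int|u|^q$ and $\int|u|^{p^*_\alpha}$ by asserting that $u$ is (essentially) a weak solution of the limit equation. That assertion is not free: it requires a.e.\ convergence of the Grushin gradients (a Boccardo--Murat type argument in this degenerate setting), which you only gesture at. Worse, the ``cleanest route'' you propose, using only the inequalities $\lambda\int|u|^q\le\|\nabla_\alpha u\|_{\alpha,p}^p$ and $\int|u|^{p^*_\alpha}\le\|\nabla_\alpha u\|_{\alpha,p}^p$ plus H\"older, does \emph{not} produce the constant $h$ of \eqref{eq_def_h}: it leads to minimizing $x\mapsto \tfrac{1}{N_\alpha}x^{p^*_\alpha}-\lambda\bigl(\tfrac1q-\tfrac1{p^*_\alpha}\bigr)|\Omega|^{1-q/p^*_\alpha}x^{q}$, whose minimum is $-h'\lambda^{p^*_\alpha/(p^*_\alpha-q)}$ with $h'>h$ because the coefficient is $\tfrac1q-\tfrac1{p^*_\alpha}>\tfrac1q-\tfrac1p$; with $h'>h$ there is no contradiction with the hypothesis $c<S^{N_\alpha/p}/N_\alpha-h\lambda^{p^*_\alpha/(p^*_\alpha-q)}$, so the argument as written proves a weaker statement. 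Your bound collapses to the sharp one only if you use the full equality $\|\nabla_\alpha u\|_{\alpha,p}^p=\lambda\int|u|^q+\int|u|^{p^*_\alpha}$ (via $\tfrac1{N_\alpha}+\tfrac1{p^*_\alpha}=\tfrac1p$), and that equality rests precisely on the unproved weak-solution/gradient-convergence step.

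The paper sidesteps all of this by choosing the multiplier $\tfrac1p$ instead of $\tfrac1{p^*_\alpha}$: in $I_\lambda(u_n)-\tfrac1p\langle I_\lambda'(u_n),u_n\rangle$ the gradient terms cancel, leaving $\lambda\bigl(\tfrac1p-\tfrac1q\bigr)\|u_n\|_q^q+\tfrac1{N_\alpha}\|u_n\|_{p^*_\alpha}^{p^*_\alpha}$, whose limit is computed using only the compact embedding in $L^q$ and the tight convergence $|u_n|^{p^*_\alpha}\stackrel{*}{\rightharpoonup}\nu=|u|^{p^*_\alpha}+\sum_j\nu_j\delta_{z_j}$. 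This yields directly
\begin{equation*}
c\ \ge\ \frac{S^{N_\alpha/p}}{N_\alpha}+\frac{1}{N_\alpha}\int_\Omega|u|^{p^*_\alpha}\,\dz-\lambda\Bigl(\frac1q-\frac1p\Bigr)\int_\Omega|u|^q\,\dz,
\end{equation*}
and then H\"older plus the one-variable minimization gives exactly $-h\lambda^{p^*_\alpha/(p^*_\alpha-q)}$, with no need to identify $u$ as a weak solution. To repair your proof, either switch to this combination, or supply a full a.e.\ gradient-convergence argument for the Grushin gradient and then use the resulting identity (not just the one-sided inequalities) in the minimization. The remaining minor points (the cutoff must be justified in the Grushin metric, as in the paper's references, and your final monotonicity/Simon step is essentially the paper's Cauchy-sequence argument for $A_p$) are fine.
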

\begin{proof}
	Pick any $c<S^{N_\alpha/p} /N_\alpha - h \lambda^{\frac{p^*_\alpha}{p^*_\alpha-q}}$. Consider a sequence $(u_n) \subset \mathring{W}^{1,p}_\alpha(\Omega)$ such that (here and in the following, $o(1)$ stands for a term that vanishes as $n \to \infty$) 
    \begin{equation}
    \label{eq_ps_1}
        I_\lambda(u_n) = \frac{1}{p}\|u_n\|_{\alpha,p}^p - \frac{\lambda}{q}\|u_n\|_q^q - \frac{1}{p^*_\alpha}\|u_n\|_{p^*_\alpha}^{p^*_\alpha}   = c  + o(1) 
    \end{equation}
and
    \begin{equation}
        \label{eq_ps_2}
            I'_\lambda(u_n)= o(1) \quad \hbox{in $W^{-1,p'}_\alpha(\Omega)\coloneqq \left(\mathring{W}^{1,p}_\alpha(\Omega) \right)^*$}.
    \end{equation}
First of all, we claim that
	\begin{equation}
		\label{claim:1}
		\text{the sequence $( u_n )$ is bounded in $\mathring{W}^{1,p}_\alpha(\Omega).$}
	\end{equation}
By \eqref{eq_ps_2} it follows that 
	\begin{displaymath}
        \|u_n\|_{\alpha,p}^p -\lambda\|u_n\|_q^q- \|u_n\|_{p^*_\alpha}^{p^*_\alpha} = o(1) \|u_n\|_{\alpha,p} 
	\end{displaymath}
and inserting this identity into \eqref{eq_ps_1} gives
\begin{equation*}
    \frac{1}{p} \|u_n\|_{\alpha,p}^p - \frac{\lambda}{q}\|u_n\|_q^q - \frac{1}{p^*_\alpha} \|u_n\|_{\alpha,p}^p + \frac{\lambda}{p^*_\alpha}\|u_n\|_q^q = c + o(1) + o(1)\|u_n\|_{\alpha,p},
\end{equation*}
that is
\begin{equation}
\label{eq_bound_norm}
    \frac{1}{N_\alpha}\|u_n\|_{\alpha,p}^p = o(1) \|u_n\|_{\alpha,p} + \lambda \left(\frac{1}{q} - \frac{1}{p^*_\alpha} \right) \|u_n\|_q^q + c + o(1).
\end{equation}
Observe that $\frac{1}{q} - \frac{1}{p^*_\alpha}>0$ since $q<p$. Moreover, by the continuous embedding $\mathring{W}^{1,p}_\alpha(\Omega)\hookrightarrow L^q(\Omega)$ of Proposition \ref{prop_sobolev} and \eqref{eq_bound_norm} we get
\begin{equation*}
     \|u_n\|_{\alpha,p}^p \le c_1 + c_2\|u_n\|_{\alpha,p} + c_3 \|u_n\|_{\alpha,p}^q,
\end{equation*}
where $c_1$, $c_2$ and $c_3$ are positive constants not depending on $n$. Therefore, since $q<p$, (\ref{claim:1}) is proved.

So, there exist a sequence, still denoted by $( u_n )$ and $u\in \mathring{W}^{1,p}_\alpha(\Omega)$ such that
    \begin{align}
        u_n \rightharpoonup u &~\hbox{in $\mathring{W}^{1,p}_\alpha(\Omega)$}\notag \\
        u_n \to u &~\hbox{in $L^r(\Omega)$, for any $r\in [1,p^*_\alpha)$}     \label{eq_convergence}\\
        u_n \to u &~\hbox{a.e. in $\Omega$.} \notag
    \end{align}
 Now, by Theorem \ref{thm:concentration} there exist two non-negative bounded measures $\mu,\nu$ in $\Omega$, an at most countable family of points $\lbrace z_j\rbrace_{j\in\mathcal{A}}$ and positive numbers $\lbrace \mu_j \rbrace_{j\in \mathcal{A}}$, $\lbrace \nu_j \rbrace_{j\in \mathcal{A}}$ such that
	\begin{equation}
    \label{eq_convergence_measure}
	 \lvert \nabla_\alpha u_n\rvert^p \buildrel\ast\over\rightharpoonup \mu, \quad \lvert u_n\rvert^{p^*_\alpha} \buildrel\ast\over\rightharpoonup \nu
	\end{equation}
	and
	\begin{equation}
		\label{eq_nu_mu} \nu = \lvert u \rvert^{p^*_\alpha} + \sum_{j\in\mathcal{A}} \nu_j \delta_{z_j},
		\quad\mu\ge \lvert \nabla_\alpha u\rvert^p + S\sum_{j\in\mathcal{A}}\mu_j \delta_{z_j}.
	\end{equation}
Pick $z_j\in\Omega$ for some fixed $j\in\mathcal{A}$. Let us consider a function that is $\varphi\in C^{\infty}_0(\R^N)$ such that
\begin{equation}
    \label{eq_test_phi}
    \varphi\equiv 1 ~\hbox{on $B_\varepsilon(z_j)$}, \quad \varphi\equiv 0 ~\hbox{on $\R^N\setminus B_{2\varepsilon}(z_j)$}, \quad |\nabla_\alpha\varphi|\le \frac{d}{\varepsilon}, 
\end{equation}
for a positive constant $d$. The existence of a cut-off function for general Lipschitz vector fields, as observed in \cite[p. 1850]{GL03}, was proved in \cite{FSS97, GN98}.
Since the sequence $(u_n\varphi)$ is bounded in $\mathring{W}^{1,p}_\alpha(\Omega)$, by \eqref{eq_ps_2} we get that $\langle I'_\lambda(u_n), \varphi u_n\rangle = 0$. So, passing to the limit as $n\to\infty$ by \eqref{eq_convergence} and \eqref{eq_convergence_measure}
\begin{equation}
    \label{eq_limit}
    \int_\Omega \varphi \dmu+ \lim_{n\to\infty} \int_\Omega u_n |\nabla_\alpha u_n|^{p-2} \nabla_\alpha u_n\nabla_\alpha\varphi\,\dz - \lambda\int_\Omega |u|^q \varphi\,\dz - \int_\Omega \varphi\dnu = 0.
\end{equation}
Now, by \eqref{eq_test_phi} and \eqref{eq_convergence}-\eqref{eq_convergence_measure}-\eqref{eq_nu_mu}
\begin{align*}
   0&\le \lim_{n\to\infty} \left\lvert\int_\Omega u_n |\nabla_\alpha u_n|^{p-2} \nabla_\alpha   u_n \nabla_\alpha\varphi\,\dz\right\rvert \le \lim_{n\to\infty}\int_\Omega |u_n| |\nabla_\alpha u_n|^{p-1}|\nabla_\alpha\varphi|\,\dz\\ &\le \frac{d}{\varepsilon}\lim_{n\to\infty}\int_{B_{2\varepsilon}(z_j)} |u_n| |\nabla_\alpha u_n|^{p-1}\,\dz \le \lim_{n\to\infty}\frac{d}{\varepsilon} \left(\int_{B_{2\varepsilon}(z_j)}|\nabla_\alpha u_n|^p \,\dz\right)^{\frac{p-1}{p}} \left(\int_{B_{2\varepsilon}(z_j)} |u_n|^p \,\dz\right)^{\frac{1}{p}} \\&
   = \frac{d}{\varepsilon} \mu(B_{2\varepsilon}(z_j))^{\frac{p-1}{p}} \left(\int_{B_{2\varepsilon}(z_j)} |u|^p \,\dz\right)^{\frac{1}{p}} \le \frac{d}{\varepsilon} \mu(B_{2\varepsilon}(z_j))^{\frac{p-1}{p}}  \left(\int_{B_{2\varepsilon}(z_j)} |u|^{p^*_\alpha}\,\dz \right)^{\frac{1}{p^*_\alpha}} |B_{2\varepsilon}(z_j)|^{\frac{1}{N_\alpha}} \\&\le \tilde{c} \mu(B_{2\varepsilon}(z_j))^{\frac{p-1}{p}} \left(\int_{B_{2\varepsilon}(z_j)} |u|^{p^*_\alpha}\,\dz \right)^{\frac{1}{p^*_\alpha}} \to 0\quad \text{as}~\varepsilon\to 0,
\end{align*}
where $\tilde{c}$ is a constant not depending on $\varepsilon$, since $|B_{2\varepsilon}(z_j)|\sim \varepsilon^{N_\alpha}$.

Then by \eqref{eq_limit}
\begin{equation}
\label{eq_uguale}
    0 =\lim_{\varepsilon\to 0}\left( \int_\Omega \varphi\dmu - \lambda\int_\Omega |u|^q\varphi\,\dz - \int_\Omega \varphi\dmu\right) = \mu_j - \nu_j.
\end{equation}
By Theorem \ref{thm:concentration}, $\mu_j\ge S \nu_j^{\frac{p}{p^*_\alpha}}$, and by \eqref{eq_uguale} $\nu_j\ge S \nu_j^{\frac{p}{p^*_\alpha}}$. That is either $\nu_j = 0$, or 
\begin{equation}
\label{eq_nuk}
    \nu_j \ge S^{\frac{1}{1-\frac{p}{p^*_\alpha}}} = S^{N_\alpha/p}.
\end{equation}
Note that, in particular, we can conclude that the family $\mathcal{A}$ is bounded, since $\nu$ is a finite measure. The aim now is to prove that \eqref{eq_nuk} is not admissible.

Let us assume that there exists $j_0\in\mathcal{A}$ such that $\nu_{j_0}\ne 0$, i.e. $\nu_{j_0}\ge S^{N_\alpha/p}$. 
Now
\begin{align*}
    c &= \lim_{n\to\infty} I_\lambda(u_n) = \lim_{n\to\infty}\left(I_\lambda(u_n) - \frac{1}{p} \langle I'_\lambda(u_n), u_n\rangle \right) \\
    &= \lambda\left(\frac{1}{p}-\frac{1}{q} \right)\int_\Omega |u|^q\,\dz +\frac{1}{N_\alpha}\int_\Omega |u|^{p^*_\alpha}\,\dz +\frac{1}{N_\alpha}\sum_{j\in\mathcal{A}} \nu_j\\
    & \ge \lambda\left(\frac{1}{p}-\frac{1}{q} \right)\int_\Omega |u|^q\,\dz +\frac{1}{N_\alpha}\int_\Omega |u|^{p^*_\alpha}\,\dz +\frac{1}{N_\alpha} S^{N_\alpha/p}
\end{align*}
and by H\"{o}lder's inequality
\begin{equation}
    \label{eq_est_c}
    c\ge \frac{1}{N_\alpha} S^{N_\alpha/p} + \frac{1}{N_\alpha} \int_\Omega |u|^{p^*_\alpha}\,\dz - \lambda\left( \frac{1}{q} - \frac{1}{p} \right) |\Omega|^{(p^*_\alpha-q)/p^*_\alpha} \left(\int_\Omega |u|^{p^*_\alpha}\,\dz \right)^{q/p^*_\alpha}.
\end{equation}
Now, Let us consider the function $f\colon\R\to\R$, defined as $f(x)\coloneqq A x^{p^*_\alpha} - \lambda B x^q$, with $A= 1/N_\alpha$ and $B= \left( \frac{1}{q} - \frac{1}{p} \right) |\Omega|^{(p^*_\alpha-q)/p^*_\alpha}$. This function attains its absolute minimum (for $x>0$) at the point $x_0 = (\lambda B q/p^*_\alpha A)^{\frac{1}{p^*_\alpha-q}}$. That is, by a straightforward calculation,
\begin{equation*}
f(x)\ge f(x_0) = - h  \lambda^{\frac{p^*_\alpha}{p^*_\alpha-q}},    
\end{equation*}
where $h$ is given in \eqref{eq_def_h}. By \eqref{eq_est_c}, $c\ge S^{N_\alpha/p}/N_\alpha - h \lambda^{\frac{p^*_\alpha}{p^*_\alpha-q}}$, which is in contradiction with the hypothesis on $c$.

This implies that $\nu_j = 0$ for all $j\in\mathcal{A}$, and so $\lim_{n\to\infty}\int_\Omega |u_n|^{p^*_\alpha}\,\dz = \int_\Omega |u|^{p^*_\alpha}\,\dz$,
and thus
\begin{equation}
\label{eq_conv_strong}
u_n\to u~\text{strongly in}~ L^{p^*_\alpha}(\Omega).
\end{equation}	
Now, in order to conclude with the strong convergence of $u_n$ to $u$ in $\mathring{W}^{1,p}_\alpha(\Omega)$ we proceed as in \cite[Lemma 6.1]{MMBS25}. Let $A_p\colon \mathring{W}^{1,p}_\alpha(\Omega)\to W_\alpha^{-1,p'}(\Omega)$ be the continuous operator defined  by
 \begin{displaymath}
 \langle A_p(u),v\rangle \coloneqq \int_{\Omega} \lvert \nabla_\alpha u \rvert^{p-2}\nabla_\alpha u \nabla_\alpha v \,\dz.
 \end{displaymath}
A straightforward computation shows that the sequence $( A_p(u_n))$ is a Cauchy sequence in $W_\alpha^{-1,p'}(\Omega)$. In fact $A_p(u_n) = I'_\lambda(u_n) +\lambda \lvert u_n \rvert^{q-2} u_n + \lvert u_n \rvert^{p^*_\alpha -2} u_n$ and the claim follows from $\eqref{eq_conv_strong}$ and the Palais-Smale condition.
Now, a direct application of \cite[Eq (2.2)]{simon} implies
\begin{displaymath}
	\langle A_p(u_n) - A_p(u_m), u_n - u_m 	\rangle \ge
\begin{cases}
	c_1 \|u_n - u_m\|_{\alpha,p}^p &\hbox{if $p>2$},\\
	c_2 M^{p-2} \|u_n-u_m\|_{\alpha,p}^2 &\hbox{if $1<p\le 2$},
\end{cases}
\end{displaymath}
where $c_1 = c_1(N,\alpha,p)$, $c_2 = c_2(N,\alpha,p,\Omega)$ and $M = \max\lbrace \|u_n\|_{\alpha,p}, \|u_m\|_{\alpha,p}\rbrace$.
So
\begin{displaymath}
	\|u_n - u_m\|_{\alpha,p}\le
	\begin{cases}
		c_1^{\frac{1}{1-p}} \|A_p(u_n) - A_p(u_m)\|_{W^{-1,p'}_\alpha(\Omega)}^{\frac{1}{p-1}}  &\hbox{if $p>2$},\\
		c_2^{-1} M^{2-p} \|A_p(u_n) - A_p(u_m)\|_{W^{-1,p'}_\alpha(\Omega)} &\hbox{if $1<p\le 2$},
	\end{cases}
\end{displaymath}
so we deduce that $u_n \to u$ strongly in $\mathring{W}^{1,p}_\alpha(\Omega)$.
\end{proof}

\section{Existence of infinitely many solutions}
\label{sec_genus}

Observe that, due to the presence of the critical term, the functional $I_\lambda$ is neither coercive nor bounded from below. Therefore, we use a truncation argument to tackle the issue posed by the critical term. By the Sobolev embedding in Proposition \ref{prop_sobolev}, we get
\begin{equation}
\label{eq_est_Ilambda}
    I_\lambda(u)\ge \frac{1}{p} \int_\Omega |\nabla_\alpha u |^p\,\dz - \frac{1}{p^*_\alpha S^{p^*_\alpha/p}} \left(\int_\Omega |\nabla_\alpha u|^p \right)^{p^*_\alpha/p} - \frac{\lambda}{q} C_q \left( \int_\Omega |\nabla_\alpha u|^p  \right)^{q/p},
\end{equation}
for all $u\in\mathring{W}^{1,p}_\alpha(\Omega)$, where $S$ is defined in \eqref{eq:sobconst} and $C_q$ is the constant arising from the continuous embedding $\mathring{W}^{1,p}_\alpha(\Omega)\hookrightarrow L^q(\Omega)$, namely $C_q>0$ is such that
\begin{equation}
    \label{eq_cq}
    \|u\|_q\le C_q^{1/q}\|u\|_{\alpha,p} \quad \hbox{for all $u\in\mathring{W}^{1,p}_\alpha(\Omega)$.}
\end{equation}
Defining the function $g_\lambda\colon [0,+\infty)\to\R$  as
\begin{equation*}
    g_\lambda(x) \coloneqq \frac{1}{p} x^p - \frac{1}{p^*_\alpha S^{p^*_\alpha/p}} x^{p^*_\alpha} - \frac{\lambda}{q} C_q x^q
\end{equation*}
from \eqref{eq_est_Ilambda} we get
\begin{equation*}
I_\lambda(u) \ge g_\lambda(\|u\|_{\alpha,p}).    
\end{equation*}
Let us fix a sufficiently small $\hat{x}>0$ such that
\begin{equation*}
    \frac{1}{p} \hat{x}^p - \frac{1}{p^*_\alpha S^{p^*_\alpha/p}} \hat{x}^{p^*_\alpha}>0,
\end{equation*}
and choose $\lambda_0$ such that
\begin{equation}
\label{eq_est_lambda0}
    0<\lambda_0 < \frac{q}{C_q \hat{x}^q} \left(\frac{1}{p} \hat{x}^p - \frac{1}{p^*_\alpha S^{p^*_\alpha/p}} \hat{x}^{p^*_\alpha} \right).
\end{equation}
So, it holds $g_{\lambda_0}(\hat x) >0.$ Let us now define
\begin{equation}
\label{eq_def_x1}
  x_1\coloneqq \max\{x\in (0,\hat x)\, : g_{\lambda_0}(x)\le 0 \}.  
\end{equation}
Since $q<p$, we have that $g_{\lambda_0}(x)<0$ as $x$ approaches $0$. Therefore we can conclude that $g_{\lambda_0}(x_1) = 0$, as in Figure \ref{fig:grafico}.

\begin{figure}[h!]
\centering
\begin{tikzpicture}[scale=0.70]
\begin{axis}[
    axis lines=middle,
    xmin=0, xmax=3.2,
    ymin=0, ymax=0.7,
    xtick=\empty,
    ytick=\empty,
    axis line style={thick},
    domain=0:4,
    samples=200,
    clip=false
]
\node[blue, anchor=west] at (axis cs:2.1,0.55) {$g_{\lambda_0}$};
\addplot[thick, blue, domain=0:1] {-x^3 + x^4};
\addplot[thick, blue, domain=1:3.2] {-(1/2)*(x-2)^2 + 1/2};
\addplot[fill=blue] coordinates {(1,0)} circle (1pt);
\node[below] at (axis cs:1,0) {$x_1$};
\addplot[fill=blue] coordinates {(2,0)} circle (1pt);
\node[below] at (axis cs:2,0) {$\hat{x}$};
\end{axis}
\end{tikzpicture}
\caption{}
\label{fig:grafico}
\end{figure}

Let us now consider a smooth function $\Phi\colon [0,+\infty)\to [0,1]$ such that
\begin{equation*}
\Phi(x) =
\begin{cases}
1\!&\text{if } x \le x_1,\\
0\!&\text{if } x \ge \hat{x}
\end{cases}
\end{equation*}
and, for any $u\in\mathring{W}^{1,p}_\alpha(\Omega)$, we consider the truncated functional
\begin{equation*}
    J_\lambda(u) \coloneqq \frac{1}{p}\int_\Omega |\nabla_\alpha u|^p\,\dz - \frac{1}{p^*_\alpha S^{p^*_\alpha/p}} \left( \int_\Omega |u|^{p^*_\alpha}\,\dz \right) \Phi(\|u\|_{\alpha,p}) - \frac{\lambda}{q}\int_\Omega |u|^q\,\dz.
\end{equation*}
It is readily seen that $J_\lambda$ is bounded from below and coercive.
Other relevant properties of the $J_\lambda$ are listed below.
\begin{lemma}\label{lemma_lambda}
There exists $\bar\lambda$ such that for all $\lambda\in (0,\bar\lambda)$ the following hold.
    \begin{enumerate}[label=(\roman*)]
        \item If $J_\lambda(u)\le 0$ then $\|u\|_{\alpha,p}<x_1$ and $I_\lambda(u) = J_\lambda(u)$ in a neighborhood of $u$.
        \item The functional $J_\lambda$ satisfies the Palais-Smale condition for all $c<0$.
    \end{enumerate}
\end{lemma}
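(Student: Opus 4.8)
The plan is to reduce both claims to the one–dimensional picture encoded by $g_\lambda$ and $g_{\lambda_0}$, and to the Palais--Smale property of $I_\lambda$ already established in Lemma~\ref{lemma:Palais-Smale}. As a preliminary step I would fix $\bar\lambda$ to be any number with $0<\bar\lambda\le\lambda_0$ which is, in addition, small enough that $S^{N_\alpha/p}/N_\alpha-h\,\lambda^{p^*_\alpha/(p^*_\alpha-q)}>0$ for every $\lambda\in(0,\bar\lambda)$; this is possible since $h\ge 0$ and $S^{N_\alpha/p}/N_\alpha>0$ (and no further restriction is needed when $h=0$). From now on $\lambda\in(0,\bar\lambda)$ is fixed.

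For (i), I would prove that $\|u\|_{\alpha,p}\ge x_1$ forces $J_\lambda(u)>0$ and then take the contrapositive. Distinguish two regimes. If $x_1\le\|u\|_{\alpha,p}\le\hat x$, then, since $0\le\Phi\le 1$, the truncation only shrinks the (subtracted) critical term, whence $J_\lambda(u)\ge I_\lambda(u)\ge g_\lambda(\|u\|_{\alpha,p})$ by \eqref{eq_est_Ilambda}; moreover on $[x_1,\hat x]$ one has $g_\lambda(t)=g_{\lambda_0}(t)+\tfrac{C_q}{q}(\lambda_0-\lambda)t^{q}>g_{\lambda_0}(t)\ge 0$, the last inequality because $g_{\lambda_0}(x_1)=0$, $g_{\lambda_0}>0$ on $(x_1,\hat x)$ by \eqref{eq_def_x1}, and $g_{\lambda_0}(\hat x)>0$ by \eqref{eq_est_lambda0}; hence $J_\lambda(u)>0$. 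If instead $\|u\|_{\alpha,p}>\hat x$, then $\Phi(\|u\|_{\alpha,p})=0$, so by \eqref{eq_cq} $J_\lambda(u)=\tfrac1p\|u\|_{\alpha,p}^{p}-\tfrac\lambda q\|u\|_q^{q}\ge\tfrac1p\|u\|_{\alpha,p}^{p}-\tfrac\lambda q C_q\|u\|_{\alpha,p}^{q}$, which is strictly positive for $\|u\|_{\alpha,p}>\hat x$ since $\lambda<\lambda_0<\tfrac{q}{pC_q}\hat x^{p-q}$ (drop the negative summand in \eqref{eq_est_lambda0}). Consequently $J_\lambda(u)\le 0$ implies $\|u\|_{\alpha,p}<x_1$; and since $v\mapsto\|v\|_{\alpha,p}$ is continuous, $\|v\|_{\alpha,p}<x_1$ throughout a neighbourhood $V$ of $u$, where $\Phi(\|v\|_{\alpha,p})\equiv 1$, so that $J_\lambda\equiv I_\lambda$ on $V$.

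For (ii), take $c<0$ and a $(PS)_c$ sequence $(u_n)$ for $J_\lambda$. Since $J_\lambda(u_n)\to c<0$, we have $J_\lambda(u_n)<0$ for $n$ large, so (i) gives $\|u_n\|_{\alpha,p}<x_1$ eventually; discarding finitely many terms and invoking the neighbourhood statement of (i), $J_\lambda$ and $I_\lambda$ agree near each $u_n$, hence $I_\lambda(u_n)=J_\lambda(u_n)\to c$ and $I_\lambda'(u_n)=J_\lambda'(u_n)\to 0$ in $W^{-1,p'}_\alpha(\Omega)$. Thus $(u_n)$ is a $(PS)_c$ sequence for $I_\lambda$, and by the choice of $\bar\lambda$ we have $c<0<S^{N_\alpha/p}/N_\alpha-h\,\lambda^{p^*_\alpha/(p^*_\alpha-q)}$; Lemma~\ref{lemma:Palais-Smale} then produces a subsequence converging strongly in $\mathring{W}^{1,p}_\alpha(\Omega)$, so $J_\lambda$ satisfies $(PS)_c$.

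The only genuinely delicate ingredient --- the Palais--Smale analysis of the critical functional below the concentration threshold --- is already isolated in Lemma~\ref{lemma:Palais-Smale}, so I expect the remaining work to be bookkeeping: the monotonicity $\lambda\mapsto g_\lambda$, the positivity of $g_{\lambda_0}$ on $[x_1,\hat x]$, the control of the untruncated tail for $\|u\|_{\alpha,p}>\hat x$, ensuring $\bar\lambda$ is small enough to keep the threshold of Lemma~\ref{lemma:Palais-Smale} positive, and --- the point most easily overlooked --- checking that $J_\lambda$ and $I_\lambda$ coincide as $C^1$ maps, not merely pointwise, near the sublevel set $\{J_\lambda\le 0\}$, which is what lets Palais--Smale information pass between the two functionals.
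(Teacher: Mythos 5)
Your proof is correct and follows essentially the same route as the paper: the two-regime comparison, using $J_\lambda\ge g_\lambda(\|u\|_{\alpha,p})>g_{\lambda_0}(\|u\|_{\alpha,p})$ together with the definition of $x_1$ on $[x_1,\hat x]$ and the bound $\tfrac1p x^p-\tfrac{\lambda C_q}{q}x^q>0$ beyond $\hat x$, and then the transfer of negative-level Palais--Smale sequences from $J_\lambda$ to $I_\lambda$ so that Lemma \ref{lemma:Palais-Smale} applies below the threshold. Your minor streamlinings --- noting that \eqref{eq_est_lambda0} already forces $\lambda_0<\tfrac{q}{pC_q}\hat x^{p-q}$, so no separate constant $\tilde\lambda$ is needed, and getting the coincidence neighborhood from continuity of the norm (hence $\Phi\equiv 1$ there) rather than from continuity of $J_\lambda$ --- are harmless and, if anything, slightly cleaner than the paper's wording.
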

\begin{proof}
    \begin{enumerate}[label=(\roman*)]
        \item Let $J_\lambda(u)\le 0.$  We distinguish two cases. If $\|u\|_{\alpha,p}\ge \hat{x}$,
        \begin{equation*}
        J_\lambda(u)\ge \frac{1}{p}\|u\|_{\alpha,p}^p - \frac{\lambda C_q}{q} \|u\|_{\alpha,p} ^q = h_\lambda(\|u\|_{\alpha,p}),            
        \end{equation*}
        where $C_q$ is defined in \eqref{eq_cq} and $h_\lambda\colon [0,+\infty)\to\R$ is defined as
        \begin{equation*}
             h_\lambda(x) \coloneqq\frac{1}{p} x^p - \frac{\lambda C_q}{q} x^q.       
        \end{equation*}
        Observe that the nontrivial zero of $h_\lambda$ is given by $\tilde{x}= \left(\frac{\lambda p C_q}{q} \right)^{\frac{1}{p-q}}$ and, since $q<p$, $h_\lambda$ is positive and increasing for $x>\tilde{x}$. 
        If we choose $\lambda<\tilde{\lambda}$, where \begin{equation*}
    \tilde{\lambda} = \frac{q}{pC_q} \hat{x}^{p-q}
\end{equation*}
        then $\hat{x}>\tilde{x}$ and so 
        \begin{equation*}
        0\ge J_\lambda(u)\ge h_\lambda(\|u\|_{\alpha,p})> h_\lambda(\tilde{x}) = 0,            
        \end{equation*}
        which is a contradiction.

        If $\|u\|_{\alpha,p}<\hat{x}$, since $\Phi\le 1$, $J_\lambda(u) \ge g_\lambda(\|u\|_{\alpha,p})$ and so, choosing $\lambda<\lambda_0$, where $\lambda_0$ is defined in \eqref{eq_est_lambda0},
        \begin{equation*}
            0\ge J_\lambda(u)\ge g_\lambda(\|u\|_{\alpha,p}) > g_{\lambda_0}(\|u\|_{\alpha,p}),        
        \end{equation*}
        since the function $\lambda\mapsto g_\lambda(\cdot)$ is decreasing.
        Now, by the definition of $x_1$, it follows that $\|u\|_{\alpha,p}<x_1$ (see also again Figure \ref{fig:grafico}), that is $\Phi(\|u\|_{\alpha,p}) =1$ and $I_\lambda$ and $J_\lambda$ coincide. 
        Finally, by the continuity of the functional $J_\lambda$, there exists a neighborhood $U$ of $u$ such that $J_\lambda(u)<0$ for all $u\in U$. Therefore, for any $u\in U$, it follows that $I_\lambda(u) = J_\lambda(u)$.
        \item Let $c<0$ and $(u_n)\subset\mathring{W}^{1,p}_\alpha(\Omega)$ a sequence such that $J_\lambda(u_n)\to c$ and $J'_\lambda(u)\to 0$ as $n\to\infty$. By the previous statement we get that $I_\lambda (u_n) = J_\lambda (u_n)$ for sufficiently large $n\in\N$. Since $J_\lambda$ is coercive, the sequence $(u_n)$ is bounded in $\mathring{W}^{1,p}_\alpha(\Omega)$. 
        Choose $\lambda_*$ small such that
\begin{equation*}
    \frac{S^{N_\alpha/{p}}}{N_\alpha}- h \lambda_*^{ \frac{p^*_\alpha}{p^*_\alpha-q} }\ge 0,
\end{equation*}
where $h$ is defined in \eqref{eq_def_h}. For $\lambda<\lambda_*$, then $c< \frac{S^{N_\alpha/{p}}}{N_\alpha}- h \lambda_*^{ \frac{p^*_\alpha}{p^*_\alpha-q} }$ and Lemma \ref{lemma:Palais-Smale} gives that $J_\lambda$ satisfies the Palais-Smale condition at level $c$. 
    \end{enumerate}
Combining the previous results, the choice of $\bar\lambda = \min\{\tilde{\lambda}, \lambda_0, \lambda_*\}$ gives that (i) and (ii) are satisfied for all $\lambda\in (0,\bar\lambda)$.
\end{proof}

Before concluding the proof of Theorem \ref{thm_main}, we need some properties of the sublevels of the functional $J_\lambda$. Then we can construct the minimax levels associated to the functional.
We refer to Section \ref{sebsection_tools} for the details and the notation regarding Krasnoselskii's genus.

\medskip
For simplicity, for each $\varepsilon>0$ we set
\begin{equation*}
    J_\lambda^{-\varepsilon}\coloneqq\{ u\in\mathring{W}^{1,p}_\alpha(\Omega)\,: J_\lambda(u)\le -\varepsilon \}.
\end{equation*}
\begin{lemma}
\label{lemma_genus}
    For each $n\in\N$ there exists $\varepsilon = \varepsilon(n)$ such that
    \begin{equation*}
        \gamma\left( J_\lambda^{-\varepsilon}  \right) \ge n.
    \end{equation*}
\end{lemma}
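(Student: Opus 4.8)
The plan is to exhibit, for each $n\in\N$, an $n$-dimensional symmetric compact set inside a strictly negative sublevel of $J_\lambda$ and then invoke the monotonicity and normalization properties of the genus from Proposition~\ref{prop_genus}. Concretely, I would fix $n$ pairwise disjoint open balls (in the Grushin metric, or even in the Euclidean one) contained in $\Omega$, pick bump functions $e_1,\dots,e_n\in C^1_c(\Omega)$ with disjoint supports, and let $E_n = \operatorname{span}\{e_1,\dots,e_n\}\subset\mathring{W}^{1,p}_\alpha(\Omega)$, an $n$-dimensional subspace. On the finite-dimensional space $E_n$ all norms are equivalent, so there are constants $a_r>0$ with $\|u\|_r \ge a_r\|u\|_{\alpha,p}$ for $r\in\{q,p^*_\alpha\}$ and all $u\in E_n$.

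The key computation is the following. For $u\in E_n$ with $\|u\|_{\alpha,p}=\rho$ small, we have $\Phi(\|u\|_{\alpha,p})=1$ (once $\rho<x_1$), so
\begin{equation*}
J_\lambda(u) = \frac{1}{p}\|u\|_{\alpha,p}^p - \frac{1}{p^*_\alpha S^{p^*_\alpha/p}}\|u\|_{p^*_\alpha}^{p^*_\alpha} - \frac{\lambda}{q}\|u\|_q^q \le \frac{1}{p}\rho^p - \frac{\lambda a_q^q}{q}\rho^q
\end{equation*}
using only that the critical term is nonnegative. Since $q<p$, the right-hand side is strictly negative for $\rho>0$ sufficiently small, say for all $\rho\in(0,\rho_n]$ with $\rho_n<x_1$; moreover on the sphere $\{u\in E_n : \|u\|_{\alpha,p}=\rho_n\}$ it is bounded above by some $-\varepsilon(n)<0$. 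Thus the sphere $S_{\rho_n} := \{u\in E_n : \|u\|_{\alpha,p}=\rho_n\}$ is contained in $J_\lambda^{-\varepsilon(n)}$. This sphere is closed, symmetric, does not contain $0$, and is homeomorphic via an odd homeomorphism (radial rescaling, using that $\|\cdot\|_{\alpha,p}$ is a genuine norm on the finite-dimensional $E_n$) to the unit sphere $\mathbb{S}^{n-1}$ of $\R^n$. Hence by Proposition~\ref{prop_genus}(ii) and (v), $\gamma(S_{\rho_n}) = \gamma(\mathbb{S}^{n-1}) = n$, and by Proposition~\ref{prop_genus}(i), $\gamma(J_\lambda^{-\varepsilon(n)}) \ge \gamma(S_{\rho_n}) = n$.

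The main point requiring care — rather than a genuine obstacle — is the choice of $\rho_n$ and $\varepsilon(n)$ depending on $n$: the equivalence constant $a_q$ between $\|\cdot\|_q$ and $\|\cdot\|_{\alpha,p}$ on $E_n$ deteriorates as $n$ grows (adding more disjoint bumps), so $\rho_n$ must be taken smaller and smaller and $\varepsilon(n)\to 0$; this is fine since no uniformity in $n$ is claimed. One must also make sure $\rho_n<x_1$ so that the truncation is inactive and the inequality $\Phi\le 1$ can be replaced by $\Phi\equiv 1$ (actually $\Phi\le 1$ already suffices for the upper bound, so this is automatic). Finally, one should note $J_\lambda\in C^1$ and the identification with $\mathbb{S}^{n-1}$ uses only elementary finite-dimensional topology, so no compactness of the embedding or Palais--Smale information is needed here.
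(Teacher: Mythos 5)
Your argument is correct and follows essentially the same route as the paper: take an $n$-dimensional subspace $E_n$, use equivalence of norms on $E_n$ to bound $J_\lambda$ from above by $\frac{1}{p}\rho^p-\frac{\lambda a_q^q}{q}\rho^q$ on a small sphere (dropping the nonnegative critical term), conclude that the sphere lies in a sublevel $J_\lambda^{-\varepsilon(n)}$, and apply the genus properties (i), (ii), (v) of Proposition~\ref{prop_genus}. The only cosmetic differences are that the paper takes an arbitrary $n$-dimensional subspace rather than an explicit span of disjoint bumps, and it invokes compactness of the sphere to pick $\varepsilon(n)$, whereas your uniform upper bound already yields it directly.
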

\begin{proof}
Fix $n\in\N$, and let $E_n$ be a $n$-dimensional subspace of $\mathring{W}^{1,p}_\alpha(\Omega)$. So, $\|\cdot\|_{\alpha,p}$ and $\|\cdot\|_q$ are equivalent norms on $E_n$, that is there exists a constant $C_n>0$ such that
\begin{equation}
\label{eq_equivalence}
    C_n^{1/q}\|u\|_{\alpha,p} \le \|u\|_q,\quad \forall u\in E_n.
\end{equation}
Recall that $x_1$ was defined in \eqref{eq_def_x1}. For any $u\in E_n$ with $\|u\|_{\alpha,p}\le x_1 $  we get and from \eqref{eq_equivalence} that
\begin{equation}
\label{eq_aaa}
    I_\lambda(u) = J_\lambda(u) \le \frac{1}{p} \|u\|_{\alpha,p}^p - \frac{\lambda C_n}{q} \|u\|_{\alpha,p}^q.
\end{equation}
Let $S_\eta \coloneqq \{ u\in E_n : \|u\|_{\alpha,p} = \eta\}$, where $\eta$ is such that
\begin{equation}
\label{eq_eta}
    0<\eta<\min \left\{x_1, \left(\frac{\lambda C_n p}{q} \right)^{\frac{1}{p-q}} \right\}.
\end{equation}
So, by \eqref{eq_aaa}-\eqref{eq_eta}, for each $u\in S_\eta$
\begin{equation*}
    J_\lambda(u) \le \eta^q \left(\frac{1}{p} \eta^{p-q} - \frac{\lambda C_n}{q} \right) <0
\end{equation*}
and by the compactness of $S_\eta$ we can choose $\varepsilon = \varepsilon(n)>0$ such that $J_\lambda(u)<-\varepsilon$ for all $u\in S_\eta$. So we have $S_\eta \subset J_\lambda^{-\varepsilon}$ and by (i), (ii) and (v) of Proposition \ref{prop_genus} we get
\begin{equation*}
\gamma\left( J_\lambda^{-\varepsilon}  \right) \ge \gamma(S_\eta) = n,    
\end{equation*}
concluding the proof.
\end{proof}
For any $k\in\N$ we now define the min-max values $c_k$ as
\begin{equation*}
    c_k \coloneqq\inf_{A\in\mathcal{E}_k} \sup_{u\in A} J_\lambda(u),
\end{equation*}
where $\mathcal{E}_k = \{A\in\mathcal{E}\,: \gamma(A)\ge k\}$. Note that trivially follows that $c_k\le c_{k+1}$ for all $k\in\N$.

\begin{lemma}
\label{lemma_negative}
 $c_k\in (-\infty,0)$ for every $\lambda>0$ and $k\in\N$.
\end{lemma}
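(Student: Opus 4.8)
The plan is to show the two bounds separately. For the upper bound $c_k<0$, I would use Lemma~\ref{lemma_genus}: for each $k\in\N$ there is $\varepsilon=\varepsilon(k)>0$ with $\gamma(J_\lambda^{-\varepsilon})\ge k$, so $J_\lambda^{-\varepsilon}\in\mathcal{E}_k$ and therefore
\begin{equation*}
c_k \le \sup_{u\in J_\lambda^{-\varepsilon}} J_\lambda(u) \le -\varepsilon < 0.
\end{equation*}
This is immediate from the definition of $c_k$ as an infimum over $\mathcal{E}_k$ together with the fact that the sublevel set $J_\lambda^{-\varepsilon}$ is an admissible competitor.

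For the lower bound $c_k>-\infty$, the key point is that $J_\lambda$ is bounded from below on all of $\mathring{W}^{1,p}_\alpha(\Omega)$, a fact already recorded in the text right after the definition of $J_\lambda$ (``It is readily seen that $J_\lambda$ is bounded from below and coercive''). Hence there is a constant $m\in\R$ with $J_\lambda(u)\ge m$ for every $u$, and consequently $\sup_{u\in A} J_\lambda(u)\ge m$ for every nonempty $A$; since every $A\in\mathcal{E}_k$ has $\gamma(A)\ge k\ge 1$ and is in particular nonempty, taking the infimum gives $c_k\ge m>-\infty$. If one prefers to re-derive boundedness from below rather than cite it, one can start from the estimate $J_\lambda(u)\ge \frac1p\|u\|_{\alpha,p}^p - \frac{1}{p^*_\alpha S^{p^*_\alpha/p}}\|u\|_{\alpha,p}^{p^*_\alpha}\Phi(\|u\|_{\alpha,p}) - \frac{\lambda C_q}{q}\|u\|_{\alpha,p}^q$ and observe that the middle term vanishes once $\|u\|_{\alpha,p}\ge\hat x$ (because $\Phi\equiv0$ there) while on the bounded region $\|u\|_{\alpha,p}<\hat x$ the whole expression is bounded below by a constant; for large norms the term $\frac1p\|u\|_{\alpha,p}^p$ dominates $\frac{\lambda C_q}{q}\|u\|_{\alpha,p}^q$ since $q<p$, so the infimum over all $u$ is finite.

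Finally, the monotonicity $c_k\le c_{k+1}$ is already noted in the text and follows from $\mathcal{E}_{k+1}\subset\mathcal{E}_k$. There is no real obstacle here: the statement is essentially a bookkeeping consequence of Lemma~\ref{lemma_genus} (for the strict upper bound, which is the substantive half) and of the coercivity/boundedness of the truncated functional $J_\lambda$ (for the lower bound). The only point deserving a word of care is that $\mathcal{E}_k$ is nonempty for every $k$ — otherwise $c_k=+\infty$ by the usual convention on infima over the empty set — but this is exactly guaranteed by Lemma~\ref{lemma_genus}, which produces a set of genus at least $k$.
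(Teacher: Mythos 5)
Your argument is correct and follows essentially the same route as the paper: the upper bound $c_k\le-\varepsilon<0$ comes from Lemma~\ref{lemma_genus} via the competitor $J_\lambda^{-\varepsilon}\in\mathcal{E}_k$, and the lower bound from the boundedness from below of $J_\lambda$. The only point you gloss over (and which the paper spells out) is why $J_\lambda^{-\varepsilon}$ is actually admissible, i.e.\ closed, symmetric and not containing $0$ --- which holds since $J_\lambda$ is continuous and even and $J_\lambda(0)=0>-\varepsilon$.
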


\begin{proof}
    Fix $\lambda>0$ and $k\in\mathbb{N}$. 
    Since $J_\lambda$ is bounded from below we have that $c_k>-\infty$. 
    
    Note that $J_\lambda^{-\varepsilon}$ is closed and symmetric,  moreover $0\notin J_\lambda^{-\varepsilon}$ since $J_\lambda(0) = 0$. From Lemma \ref{lemma_genus}, there exists $\varepsilon>0$ such that $\gamma(J_\lambda^{-\varepsilon})\ge k$, so $J_\lambda^{-\varepsilon}\in\mathcal{E}_k$ and $\sup_{u\in J_\lambda^{-\varepsilon}} J_\lambda(u)\le -\varepsilon$. Then
    \begin{equation*}
        c_k = \inf_{A\in\mathcal{E}_k}\sup_{u\in A} J_\lambda(u) \le \sup_{u\in J_\lambda^{-\varepsilon}} J_\lambda(u) \le -\varepsilon <0,
    \end{equation*}
    concluding the proof.
\end{proof}

\begin{lemma} \label{lemma_genus_2}
    Let $k\in\N$ and $\lambda\in (0,\bar\lambda)$, where $\bar\lambda$ is as in Lemma \ref{lemma_lambda}. If $c =c_k = c_{k+1} = \dots = c_{k+r}$ for some $ r\in\N$, then $\gamma(K_c)\ge r+1.$ 
\end{lemma}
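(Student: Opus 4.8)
This is the standard "genus multiplicity" argument: when several consecutive min-max levels coincide, the critical set at that level is "large" in the sense of genus. I would argue by contradiction, assuming $\gamma(K_c)\le r$, and then deform a set realizing the level $c_{k+r}$ down past $c$ using the deformation lemma, but after removing a neighborhood of $K_c$ — producing a set of genus at least $k$ lying in $J_\lambda^{c-\varepsilon}$, which contradicts the definition of $c_k=c$.

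\begin{proof}
Since $\lambda<\bar\lambda$ and $c=c_k<0$ by Lemma~\ref{lemma_negative}, the functional $J_\lambda$ satisfies the Palais--Smale condition at level $c$ by Lemma~\ref{lemma_lambda}(ii). In particular $K_c$ is compact: any sequence in $K_c$ is a Palais--Smale sequence at level $c$, hence has a convergent subsequence whose limit still lies in $K_c$ by continuity of $J_\lambda$ and $J'_\lambda$. Moreover $0\notin K_c$ since $J_\lambda(0)=0\ne c$, and $K_c$ is symmetric because $J_\lambda$ is even; thus $K_c\in\mathcal{E}$ whenever it is nonempty.

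Suppose, for contradiction, that $\gamma(K_c)\le r$. By Proposition~\ref{prop_genus}(vi), since $K_c$ is compact there exists $\delta>0$ such that the closed neighborhood $U\coloneqq N_\delta(K_c)$ belongs to $\mathcal{E}$ and $\gamma(U)=\gamma(K_c)\le r$. Apply Lemma~\ref{lemma_deformation} with this $U$: since $J_\lambda$ is even, we obtain an odd homeomorphism $\eta\coloneqq\eta_1\colon\mathring{W}^{1,p}_\alpha(\Omega)\to\mathring{W}^{1,p}_\alpha(\Omega)$ and a constant $\varepsilon>0$ (which by Remark~\ref{remark_epsilon} we may take with $\varepsilon_0<-c$, so that the deformation does not touch a neighborhood of $0$, although this is not strictly needed here) such that
\begin{equation*}
\eta\bigl(J_\lambda^{c+\varepsilon}\setminus U\bigr)\subset J_\lambda^{c-\varepsilon}.
\end{equation*}
By definition of $c_{k+r}=c$, choose $A\in\mathcal{E}_{k+r}$ with $\sup_{u\in A}J_\lambda(u)<c+\varepsilon$, that is $A\subset J_\lambda^{c+\varepsilon}$. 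Then $\overline{A\setminus U}\in\mathcal{E}$, and by Proposition~\ref{prop_genus}(iv),
\begin{equation*}
\gamma\bigl(\overline{A\setminus U}\bigr)\ge \gamma(A)-\gamma(U)\ge (k+r)-r=k.
\end{equation*}
Since $\eta$ is an odd homeomorphism, $\eta(\overline{A\setminus U})\in\mathcal{E}$ and $\gamma(\eta(\overline{A\setminus U}))=\gamma(\overline{A\setminus U})\ge k$ by Proposition~\ref{prop_genus}(ii); hence $\eta(\overline{A\setminus U})\in\mathcal{E}_k$. On the other hand $\overline{A\setminus U}\subset J_\lambda^{c+\varepsilon}\setminus U$ (using that $U$ is a closed neighborhood of $K_c$, so removing it from $A$ and then closing keeps us outside $U$ after shrinking $\delta$ if necessary, or more cleanly: one first replaces $U$ by a slightly larger closed neighborhood so that $\overline{A\setminus U}$ avoids the smaller one used in the deformation), whence
\begin{equation*}
\eta\bigl(\overline{A\setminus U}\bigr)\subset \eta\bigl(J_\lambda^{c+\varepsilon}\setminus U\bigr)\subset J_\lambda^{c-\varepsilon}.
\end{equation*}
Therefore
\begin{equation*}
c_k\le \sup_{u\in\eta(\overline{A\setminus U})}J_\lambda(u)\le c-\varepsilon<c=c_k,
\end{equation*}
a contradiction. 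Hence $\gamma(K_c)\ge r+1$.
\end{proof}

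\medskip
The step I expect to be the main obstacle is the bookkeeping around the neighborhood $U$: the deformation lemma (Lemma~\ref{lemma_deformation}(v)) deforms $I^{c+\varepsilon}\setminus U$ for a \emph{given} neighborhood $U$ of $K_c$, but to apply Proposition~\ref{prop_genus}(iv) and (vi) one needs $U$ to be a \emph{closed} symmetric neighborhood with $\gamma(U)=\gamma(K_c)$, and one needs $\overline{A\setminus U}$ to still lie outside the neighborhood used in the deformation. The clean fix is to first use Proposition~\ref{prop_genus}(vi) to fix a closed neighborhood $U_0=N_{\delta_0}(K_c)$ with $\gamma(U_0)=\gamma(K_c)$, then run the deformation lemma with the smaller open neighborhood $U=\operatorname{int}N_{\delta_0/2}(K_c)$, so that $\overline{A\setminus U_0}\subset J_\lambda^{c+\varepsilon}\setminus U$ and $\gamma(\overline{A\setminus U_0})\ge k$; everything else is the routine application of parts (ii), (iv), (vi) of Proposition~\ref{prop_genus} together with oddness of $\eta_1$. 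I would present this carefully to avoid the (common) sloppiness about open versus closed neighborhoods.
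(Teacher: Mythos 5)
Your proof is correct and follows essentially the same route as the paper: the contradiction argument via Proposition \ref{prop_genus}(vi), the odd deformation $\eta_1$ from Lemma \ref{lemma_deformation}(v),(vii) with $\varepsilon_0<-c$, the subadditivity estimate $\gamma(\overline{A\setminus N_\delta(K_c)})\ge k$, and the resulting clash with the definition of $c_k$. The only difference is cosmetic: you resolve the closure-versus-neighborhood bookkeeping with a two-radius trick, while the paper implicitly uses that $\eta$ is continuous and $J_\lambda^{c-\varepsilon}$ is closed, so $\eta(\overline{A\setminus N_\delta(K_c)})\subset J_\lambda^{c-\varepsilon}$; either fix is fine.
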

\begin{proof}
    Let $\lambda\in (0,\bar\lambda)$ and $k\in\N$. By Lemma \ref{lemma_negative} it follows that $c = c_k = c_{k+1} = \dots c_{k+r} <0$. We can conclude, thanks to Lemma \ref{lemma_lambda}, that $J_\lambda$ satisfies the Palais-Smale condition at level $c$, and so the set $K_c$ is compact.

    Now, suppose that $\gamma(K_c)\le r$. Therefore, from Proposition \ref{prop_genus} (vi), there exists $\delta>0$ and a neighborhood $N_\delta(K_c)$ of $K_c$ such that $\gamma(N_\delta(K_c)) = \gamma(K_c)\le r$.
    
From (v) and (vii) of Lemma \ref{lemma_deformation} and from Remark \ref{remark_epsilon}, there exist $\varepsilon\in (0,-c)$
and an odd homeomorphism $\eta\colon \mathring{W}^{1,p}_\alpha(\Omega)\to \mathring{W}^{1,p}_\alpha(\Omega)$ such that
\begin{equation}
    \label{eq_omeom}
    \eta(J_\lambda^{c+\varepsilon}\setminus N_\delta(K_c))\subset J_\lambda^{c-\varepsilon}.
\end{equation}
    Now, by definition
    \begin{equation*}
    c = c_{k+r} = \inf_{A\in\mathcal{E}_{k+r}} \sup_{u\in A} J_\lambda(u)        
    \end{equation*}
and so there exists $A\in \mathcal{E}_{k+r}$ such that
    \begin{equation*}
        \sup_{u\in A} J_\lambda(u) < c+\varepsilon,
    \end{equation*}
    implying that $A\subset J_\lambda^{c+\varepsilon}$. Therefore, we get by \eqref{eq_omeom}
    \begin{equation}
        \label{eq_bound_genus}
        \eta(A\setminus N_\delta(K_c))\le \eta (J_\lambda^{c+\varepsilon}\setminus N_\delta(K_c))\subset J_\lambda^{c-\varepsilon}.
    \end{equation}
 On the other hand, using (i) and (iii) of Proposition \ref{prop_genus}, we obtain
  \begin{equation*}
      \gamma(\eta(\overline{A\setminus N_\delta(K_c)})) \ge \gamma(\overline{A\setminus N_\delta(K_c)})\ge \gamma(A) - \gamma(N_\delta(K_c))\ge k,
  \end{equation*}
  since $A\in\mathcal{E}_{k+r}$ and $\gamma(N_\delta(K_n))\le r$.
  
  Therefore, we have $\eta(\overline{A\setminus N_\delta(K_c)})\in\mathcal{E}_k$. From the definition of $c_k$, we get
  \begin{equation*}
      \sup_{u\in \eta(\overline{A\setminus N_\delta(K_c)})} J_\lambda(u)\ge c_k = c,
  \end{equation*}
  which is a contradiction with \eqref{eq_bound_genus}. Therefore, $\gamma(K_c)\ge r+1$.
\end{proof}

We can now conclude the proof of our main result.
\begin{proof}[Proof of Theorem \ref{thm_main}]
    Let $\lambda\in (0,\bar\lambda)$, where $\bar\lambda$ is as in Lemma \ref{lemma_lambda}. From Lemma \ref{lemma_negative}, it follows that $c_k<0$ and Lemma \ref{lemma_lambda} (ii) guarantees that $J_\lambda$ satisfies the Palais-Smale condition at level $c_k$. Therefore, we can conclude that $c_k$ is a critical value of $J_\lambda$ for every $k$, see e.g. \cite[Proposition 10.8]{AbrosettiMalchiodi}. Finally, by Lemma \ref{lemma_lambda} (i), the negative values $c_k$ are critical points of $I_\lambda$. 
    Now we distinguish two cases.
    \begin{enumerate}[label=(\roman*)]
        \item If $-\infty<c_1<c_2<\dots <c_k<c_{k+1}<\dots$ for every $k\in\N$, then $I_\lambda$ possesses infinitely many critical points.
        \item If there exist $k,r\in\N$ such that $c_{k} = c_{k+1} = \dots = c_{k+r} = c$, by Lemma \ref{lemma_genus_2} we get $\gamma(K_c)\ge r+1>1$. Then $K_c$ has infinitely many distinct points (see e.g. \cite[Remark 7.3]{R1986}) and by Lemma \ref{lemma_lambda} (ii), we obtain infinitely many negative critical values for $I_\lambda$. 
    \end{enumerate}
    Hence, in both cases, problem \eqref{problem:main} has infinitely many solutions for all $\lambda\in (0,\bar\lambda)$.
\end{proof}

\section*{Acknowledgments}
\noindent
The authors are members of \emph{Gruppo Nazionale per l’Analisi Matematica, la Probabilità e le loro Applicazioni} (GNAMPA) of the \emph{Istituto Nazionale di Alta Matematica} (INdAM). This work has been funded by the European Union - NextGenerationEU
within the framework of PNRR Mission 4 - Component 2 - Investment 1.1
under the Italian Ministry of University and Research (MUR) program
PRIN 2022 - grant number 2022BCFHN2 - Advanced theoretical aspects in
PDEs and their applications - CUP: H53D23001960006 and partially
supported by the INdAM-GNAMPA Research Project 2024: Aspetti
geometrici e analitici di alcuni problemi locali e non-locali in
mancanza di compattezza - CUP: E53C23001670001.

\bibliographystyle{plain}
\bibliography{refs}
   
\end{document}